\documentclass[11pt]{article}

\usepackage{amsmath,amsfonts,amssymb,amsthm}
\usepackage[non-compressed-cites, non-sorted-cites]{amsrefs}
\usepackage{hyperref}

\usepackage{tikz}
\usetikzlibrary{shapes}
\usetikzlibrary{arrows}

\usetikzlibrary{positioning}
\usetikzlibrary{decorations.pathreplacing}

\usepackage{enumitem}

\newenvironment{enumlist}{\begin{enumerate}[label=\upshape{(\arabic*)}]}{\end{enumerate}}

\title
{\bfseries On the existence of a \\
connected component of a graph}

\author{
Kirill Gura\thanks{Department of Mathematics, Marshall University, 1 John Marshall Drive, Huntington, WV 25755. \textit{Email:}~\texttt{kirill@gura.email}}%
\and %
Jeffry L. Hirst\thanks{Department of Mathematical Sciences, Appalachian State University,
Walker Hall, Boone, NC 28608. \textit{Email:}~\texttt{jlh@math.appstate.edu}}%
 \and %
Carl Mummert\thanks{Department of Mathematics, Marshall University, 1 John Marshall Drive, Huntington, WV 25755. \textit{Email:}~\texttt{mummertc@marshall.edu}}%
 }

\date{February 7, 2015}

\theoremstyle{plain}
\newtheorem{theorem}{Theorem}[section]
\newtheorem{corollary}[theorem]{Corollary}  
\newtheorem{lemma}[theorem]{Lemma}
\newtheorem{defn}[theorem]{Definition}      

\newcommand{\nat}{\mathbb N}  
\newcommand{\rca}{{\sf RCA}_0}
\newcommand{\aca}{{\sf ACA}_0}
\newcommand{\seq}[1]{{\langle #1\rangle}}

\newcommand{\ver}[2]{{v^{#1}_{#2}}}
\newcommand{\uver}[2]{{u^{#1}_{#2}}}

\newcommand{\cat}{{{}^\smallfrown}}

\newcommand{\proofheader}[1]{\paragraph*{\itshape\mdseries #1.}}

\newcommand{\wprob}[1]{{\mathsf{#1}}}
\newcommand{\sW}{\text{sW}}


\begin{document}

\maketitle

\begin{abstract}
We study the reverse mathematics and computability of countable graph theory, obtaining the following results. 
The principle that every countable graph has a connected component is equivalent to $\aca$ over $\rca$. The problem of decomposing a countable graph into connected components is strongly Weihrauch equivalent to the problem of finding a single component, and each is equivalent to its infinite parallelization.
For graphs with finitely many connected components, the existence of a connected
component is either provable in~$\rca$ or is equivalent to induction for $\Sigma^0_2$ formulas, depending
on the formulation of the bound on the number of components.   
\vskip\baselineskip
\noindent \textbf{Keywords:} Reverse mathematics, Weihrauch reducibility, component, graph, connected, partition, parallelization\\[\baselineskip]
\textbf{MSC Subject Class (2000):} 03B30, 03D30, 03F35, 03D45
 \end{abstract}
\newpage

\section{Introduction}

The study of countable graph theory in reverse mathematics can be traced back to the founding reverse mathematics works of Friedman~\cites{fricm, frabs}, which include the graph theoretic principles K\"onig's Lemma and Weak K\"onig's Lemma.  We focus here on principles that postulate the existence of a connected component of a countable graph. 

Motivated by a suggestion of K. Hatzikiriakou,
Hirst~\cite{hirst1992} proved that the principle that a countable graph may be decomposed into its connected components is equivalent to $\aca$ over $\rca$.  In Theorem~\ref{thm:main}, we show that the principle postulating the existence of a single connected component of each countable graph is already equivalent to~$\aca$. In the subsequent section, we show that, when there is a finite bound on the number of connected components,  the strength of statements pertaining to the decomposition depends on the formulation of the finite bound.  In some cases, a connected component may be obtained in~$\rca$~(Theorem~\ref{thm:finite}), while in other cases additional induction is required~(Theorem~\ref{thm:finite2}).   

When all connected components of the graph are finite, it is natural to ask whether one can find an infinite set of vertices such that no two are in the same component. Theorem~\ref{thm:finitecomponents} shows that $\aca$ is required to find solutions to this problem. Moreover, there is a computable graph such that there is no infinite c.e. set of vertices with no two in the same component.  In Section~\ref{sec:part}, we isolate a combinatorial principle that gives another view of Theorem~\ref{thm:main}. 

In the final section, we study the problem of finding a decomposition of a graph into components and the problem of finding a single component of a graph from the viewpoint of Weihrauch reducibility. We show that these principles are strongly Weihrauch equivalent to each other and to their parallelizations, and their strong Weihrauch degree is that of~$\widehat{\mathsf{LPO}}$, which is the infinite ``parallel'' product of the limited principle of omniscience with itself. 
 
We follow Simpson~\cite{simpson} for the basic definitions and background results of reverse mathematics. This paper relies on two standard subsystems of second order arithmetic. $\rca$ includes the basic axioms $\mathsf{PA}^-$, the $\Delta^0_1$ comprehension scheme, and the $\Sigma^0_1$ induction scheme.  $\aca$ extends $\rca$ with the comprehension and induction schemes for arithmetical formulas.   We follow Hirst~\cite{hirst1992} for definitions of graph theory in reverse mathematics. 


\section{Existence of a connected component}\label{sec2}

In $\rca$, a \emph{countable graph} $G$ is a pair of sets $(V, E)$ in which the vertex set $V$ is a nonempty infinite subset of $\nat$ and the edge set $E$ is a set of unordered pairs of elements of $V$.  A (\emph{connected}) \emph{component} of a countable graph $G$ is a set $C \subseteq V$ such that:
\begin{enumlist}
\item for all $x$ and $y$ in $C$, there is a (finite) path in $G$ from $x$ to~$y$, and
\item for all $x$ and $y$ in $V$, if $x \in C$ and there is a path from $x$ to~$y$ then $y \in C$.
\end{enumlist}
$\rca$ proves that a subset of a countable graph is a connected component in this sense if and only if it is a maximal path connected subset of the graph. Our definition of a component has the advantage, compared to ``maximal path connected subset,'' of being stated as a $\Pi^0_2$ formula.   A set of vertices of a graph is \textit{totally disconnected} if no two vertices in the set are in the same connected component of the graph.
As usual, a countable graph $(V,E)$ is \emph{computable} if $V = \mathbb{N}$ and $E$ is a computable relation, and a connected component is computable if its characteristic function is computable.

Our first theorem shows that the problem of constructing even a single component of a countable graph exceeds the power of~$\rca$. 
 As usual, $\nat^{<\nat}$ denotes the set of finite sequences of natural numbers, including the empty sequence. For $\sigma, \tau \in \nat^{<\nat}$, $\sigma\cat \tau$ denotes the concatenation of $\sigma$ and $\tau$, and $\sigma \subset \tau$ indicates that $\sigma$ is a proper initial segment of~$\tau$.

\begin{theorem}\label{thm:main}
The principle that every countable graph has a connected component is equivalent to $\aca$ over~$\rca$.
\end{theorem}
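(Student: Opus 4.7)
For the forward direction, I would argue in $\aca$: given a nonempty countable graph $G=(V,E)$, fix any $x\in V$ and apply arithmetical comprehension to form $C=\{y\in V : \text{there exists a finite path in }G\text{ from }x\text{ to }y\}$. This set is defined by a $\Sigma^0_1$ formula in $G$, so $\aca$ supplies it, and it is immediate from the definition that $C$ is a connected component of $G$ containing $x$.

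For the reverse direction, I would work in $\rca$ assuming the component principle and prove that the range of every injection $f\colon\nat\to\nat$ exists, which is a standard equivalent of $\aca$ over $\rca$. Given such an $f$, the plan is to construct a countable graph $G_f=(V,E)$, uniformly computable from $f$, such that from any component $C$ of $G_f$ the set $\text{range}(f)$ can be defined by a $\Delta^0_1$ formula in $(G_f,f,C)$. Applying the component principle to $G_f$ then yields a component $C$, and $\rca$ forms $\text{range}(f)$ by $\Delta^0_1$-comprehension.

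The construction would use vertices $\ver{n}{s}$ for $n,s\in\nat$, with always-present edges from $\ver{n}{s}$ to $\ver{n}{s+1}$, and with further edges added at stage $s$ when $f(s)=n$; the standard ray trick of inserting auxiliary chains keeps the edge set $\Delta^0_1$ in $f$. For each $m\in\nat$ I would arrange a uniformly chosen pair of vertices $u_m,w_m$ and design $G_f$ so that, in every component $C$, the biconditional $u_m\in C \leftrightarrow w_m\in C$ holds precisely when $m\in\text{range}(f)$. The main obstacle lies here: a naive Hirst-style construction yields one ``main'' component encoding $\text{range}(f)$ together with small leftover rays for each $n\notin\text{range}(f)$, and the principle may return such a small ray, which reveals only one bit of information. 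I expect to overcome this by attaching to every vertex of each potentially small component a secondary encoding substructure indexed over $m\in\nat$, and by cross-linking these substructures with labels from $\nat^{<\nat}$ so that no component can arise without carrying a full encoding of $\text{range}(f)$. The main difficulty I anticipate is verifying that this cross-linking really does eliminate every residual small component while keeping the edge set of $G_f$ computable from $f$; once this is in hand, $\Delta^0_1$-comprehension on the biconditionals $u_m\in C \leftrightarrow w_m\in C$ produces $\text{range}(f)$ and the reversal is complete.
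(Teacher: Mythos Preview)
Your forward direction is correct and in fact more direct than the paper's, which detours through a full decomposition function before selecting one component. Your overall reversal strategy and your diagnosis of the obstacle are also right: the naive construction leaves small ``leftover'' rays that reveal only a single bit, and your instinct to use labels from $\nat^{<\nat}$ to cross-link is exactly the mechanism the paper employs.

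The genuine gap is in your proposed decoding. You want fixed pairs $u_m,w_m$ such that, for \emph{every} component $C$, the biconditional $u_m\in C \leftrightarrow w_m\in C$ holds precisely when $m\in\text{range}(f)$. But with fixed vertices this is impossible: if $u_m$ and $w_m$ lie in the same component then the biconditional holds for every $C$, while if they lie in different components it fails for exactly those two components and holds trivially for all others. No arrangement of fixed test-pairs can make the biconditional track $\text{range}(f)$ uniformly across all components. More broadly, ``attaching secondary encoding substructures to the potentially small components'' just recreates the problem one level down: those substructures will themselves spawn new small leftover pieces unless the construction is made self-similar.

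The paper's resolution is to make the decoding \emph{local} and the construction fully recursive from the start. The vertex set is $\{\ver{\sigma}{n} : \sigma\in\nat^{<\nat},\ n\in\nat\}$, with edges $\ver{\sigma}{i}$ to $\ver{\sigma}{i+1}$ always present, and an edge $\ver{\sigma}{i}$ to $\ver{\sigma\cat\seq{j}}{0}$ whenever $f(i)=j$. Given any component $C$, one first finds some $\ver{\sigma}{0}\in C$ and then tests $j\in\text{range}(f)$ by asking whether $\ver{\sigma\cat\seq{j}}{0}\in C$; the test vertex depends on the anchor $\sigma$ discovered inside $C$, not on a globally fixed pair. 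The tree-indexed branching guarantees that every ``leftover'' piece is itself a complete copy of the encoding rooted at a longer $\sigma$, which is the precise form of the $\nat^{<\nat}$ cross-linking you anticipated but did not pin down.
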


\begin{proof}
First, working in $\aca$, let $G = (V,E)$ be a countable graph.
By Theorem~2.5 of Hirst~\cite{hirst1992}, there is a function $f\colon V \to \nat$ such that for all $v$ and $v^\prime$ in $V$,
$f(v) = f(v^\prime )$ if and only if $v$ and $v^\prime$ lie in the same connected component of $G$.  Let $v_0$ be
an element of $V$.  Then $\Delta^0_1$ comprehension proves the existence of the set
$C = \{ v \in V : f(v) = f(v_0 ) \}$, which is a connected component of $G$.

For the reversal, we work in $\rca$ and assume that every countable graph has a connected component.  By Lemma~III.1.3 of
Simpson~\cite{simpson}, to establish $\aca$ it suffices to prove the existence of the range of an arbitrary injection~$f\colon \nat \to \nat$.

\proofheader{Construction} We construct a countable graph~$G$. For each $\sigma \in \nat^{<\nat}$ and each $n \in \nat$, $G$ has a vertex labeled $\ver{\sigma}{n}$, and these are all the vertices.
For each $\sigma \in \nat^{<\nat}$ and each $i \in \nat$, there is an edge from $\ver{\sigma}{i}$ to $\ver{\sigma}{i+1}$.
If $f(i) = j$, then for each $\sigma \in \nat^{< \nat}$ there is an edge from $\ver{\sigma}{i}$ to $\ver{\sigma \cat \seq{j}}{0}$.
An illustration of a typical piece of the construction is shown in Figure~\ref{fig:main}.

\proofheader{Verification}
$\Delta^0_1$ comprehension suffices to form this set of edges, using $f$ as a parameter, and thus the graph $G$ can be constructed in $\rca$. By assumption, $G$ has a connected component $C$. Let $v^{\sigma}_{k}$ be a vertex of $C$; then $v^{\sigma}_0$ is also in~$C$. 

\textit{Claim:} Each $j \in \nat$ is in the range of $f$ if and only if $v^{\sigma \cat \seq{j}}_{0}$ is in $C$. The forward direction is immediate;
if $f(i) =j$ then the construction ensures that there is a path from $v^{\sigma}_0$ to $v^{\sigma\cat\seq{j}}_0$, and so $v^{\sigma\cat\seq{j}}_0 \in C$. For the converse, suppose that $v^{\sigma \cat \seq{j}}_{0}$ is in $C$.  
We can prove from the construction, using $\Pi^0_1$ induction on the length of the path, that for any path beginning at $v^{\sigma\cat\seq{j}}_0$ 
which contains no edge of the form $(v^{\sigma\cat\seq{j}}_0, v^{\sigma}_i)$, the final vertex of the path is of the form $v^{\sigma\cat\seq{j}\cat\tau}_k$ for some~$k$ and some sequence~$\tau$ which might be empty.  In particular, the final vertex of the path is not~$v^{\sigma}_0$. Thus, because there is a path from $v^{\sigma\cat\seq{j}}_0$ to $v^{\sigma}_0$ in~$C$, there must be an edge in $G$ of the form  $(v^{\sigma\cat\seq{j}}_0, v^{\sigma}_i)$. But then $f(i) = j$, so $j$ is in the range of $f$, as desired.  This proves the claim.

By the claim, the range of $f$ consists of exactly those $j$ such that $v^{\sigma \cat \seq{j}}_{0}$ is in~$C$.  Thus the range of $f$ can be formed using $\Delta^0_1$ comprehension relative to~$C$.
\end{proof}

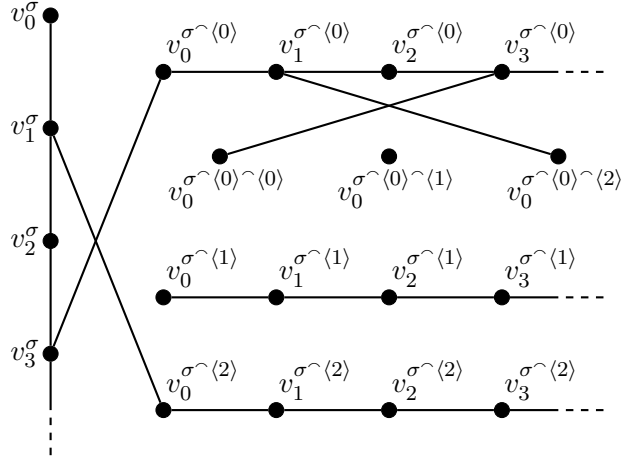
\begin{figure}
\begin{center}

\begin{tikzpicture}

\newcommand{\hgap}{1.5}
\newcommand{\vgap}{1.5}

\tikzstyle{dot}=[draw,circle,inner sep=2pt,fill]
\tikzstyle{label}=[left,shift={(0,0)}]
\tikzstyle{labelb}=[above right,shift={(-0.1,0)}]
\tikzstyle{labelc}=[below,shift={(0.1,0)}]



\foreach \name/\z in {    {$\ver{\sigma}{0}$}/0, {$\ver{\sigma}{1}$}/1, %
 				{$\ver{\sigma}{2}$}/2,%
				{$\ver{\sigma}{3}$}/3%
				}
{
	\node[dot] (A0-\z) at (0,-\vgap*\z) {};
	\node[label] at (A0-\z) {\name};
}

\foreach \from/\to in { 0/1, 1/2, 2/3}
	\draw[thick] (A0-\from)--(A0-\to);

\newcommand{\dotstart}{3.5}

\draw[thick] (A0-3)--(0,-\dotstart*\vgap);
\draw[thick] (0,-\dotstart*\vgap - 0.1)--(0,-\dotstart*\vgap - 0.2);
\draw[thick] (0,-\dotstart*\vgap - 0.3)--(0,-\dotstart*\vgap - 0.4);
\draw[thick] (0,-\dotstart*\vgap - 0.5)--(0,-\dotstart*\vgap - 0.6);



\foreach \z/\zskip in {0/0,1/2,2/3}
	\foreach \name/\x in { %
			    $\ver{\sigma\cat \seq{\z}}{0}$/0,%
			     $\ver{\sigma\cat \seq{\z}}{1}$/1, %
			     $\ver{\sigma \cat \seq{\z}}{2}$/2,%
			     $\ver{\sigma \cat \seq{\z}}{3}$/3}
	{
		\node[dot] (B\z-\x) at (\hgap+ \hgap*\x,-0.5*\vgap-\vgap*\zskip) {};
		\node[labelb] at (B\z-\x) {\name};
 	}    
	
\foreach \z in {0,1,2} 
	\foreach \from/\to in {0/1, 1/2, 2/3}
		\draw[thick] (B\z-\from)--(B\z-\to);
		
\foreach \z/\zskip in {0/0,1/2,2/3} 
{  
	\draw[thick] (B\z-3)--(4.5*\hgap,-0.5*\vgap-\vgap*\zskip); 
         \draw[thick] (4.5*\hgap + 0.1,-0.5*\vgap-\vgap*\zskip)--(4.5*\hgap + 0.2,-0.5*\vgap-\vgap*\zskip);
         \draw[thick] (4.5*\hgap + 0.3,-0.5*\vgap-\vgap*\zskip)--(4.5*\hgap + 0.4,-0.5*\vgap-\vgap*\zskip);
         \draw[thick] (4.5*\hgap + 0.5,-0.5*\vgap-\vgap*\zskip)--(4.5*\hgap + 0.6,-0.5*\vgap-\vgap*\zskip);
}



\foreach \x in {0,1,2} 
{
  \foreach \name/\z in {  $\ver{\sigma\cat \seq{0}\cat\seq{\x}}{0}$/0}
 	{
		\node[dot] (C\x-\z) at (1.5*\hgap+ 1.5* \hgap*\x,-1.25*\vgap-\vgap*\z) {};
		\node[labelc] at (C\x-\z) {\name};	
	}    
}



\draw[thick] (A0-1)--(B2-0);
\draw[thick] (A0-3)--(B0-0);
\draw[thick] (B0-3)--(C0-0);
\draw[thick] (B0-1)--(C2-0);

\end{tikzpicture}

\end{center}
\caption{Illustration of a typical piece of the construction in Theorem~\ref{thm:main}. 
The diagonal edges indicate that $f(1) = 2$ and $f(3) = 0$.}\label{fig:main}
\end{figure}

Substituting a computable enumeration of $\emptyset'$ for $f$, the construction in the proof yields a purely computability theoretic corollary.

\begin{corollary}\label{cor:main}
There is a computable graph $G$ such that every connected component of $G$ computes \textup{(}and is thus Turing equivalent to\textup{)} $\emptyset'$.
\end{corollary}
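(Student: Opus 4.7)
My plan is to instantiate the construction from the proof of Theorem~\ref{thm:main} with a suitable computable function~$f$ and then read off both directions of the Turing equivalence from the structure of the resulting graph. First, I will fix a computable injection $f\colon \nat \to \nat$ whose range equals $\emptyset'$; such an $f$ exists because $\emptyset'$ is c.e.\ and infinite. Feeding this $f$ into the construction from Theorem~\ref{thm:main} produces a graph $G$ whose edge relation is decidable uniformly in $f$, so $G$ is a computable graph.

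For the direction $\emptyset' \leq_T C$, where $C$ is any component of $G$, I will invoke the claim established inside the proof of Theorem~\ref{thm:main}. Using $C$ as an oracle, I can computably search for the first labeled vertex $\ver{\sigma}{k}$ that belongs to $C$, thereby recovering the sequence~$\sigma$. The claim then asserts that for every $j \in \nat$, $j$ lies in $\mathrm{range}(f) = \emptyset'$ if and only if $\ver{\sigma\cat\seq{j}}{0} \in C$, which gives a $C$-computable decision procedure for $\emptyset'$.

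For the converse reduction $C \leq_T \emptyset'$, I will use the fact that, since $G$ is computable, the relation ``there is a finite path in $G$ from $u$ to $v$'' is $\Sigma^0_1$ and hence decidable from $\emptyset'$. Fixing any single witness $v_\ast \in C$ (found as above), the component $C$ consists of exactly those vertices joined to $v_\ast$ by a path, so $\emptyset'$ computes the characteristic function of~$C$.

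I do not foresee a serious obstacle: the combinatorial content of the construction and the path-structure claim were already carried out in the proof of Theorem~\ref{thm:main}, and the remaining task is only to verify that their effective content delivers the stated Turing equivalence. The one point that warrants a brief remark is that the forward reduction must work uniformly for an \emph{arbitrary} component, not just for a distinguished one; this is immediate because the sequence $\sigma$ can be read off from any vertex of $C$, and some such vertex is $C$-computable.
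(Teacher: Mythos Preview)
Your proposal is correct and follows exactly the paper's approach: the paper's proof consists of the single remark that one substitutes a computable enumeration of $\emptyset'$ for $f$ in the construction of Theorem~\ref{thm:main}, and you carry this out while additionally spelling out both Turing reductions explicitly. The only cosmetic quibble is the parenthetical ``(found as above)'' in the $C \leq_T \emptyset'$ direction---there you are not using $C$ as an oracle but simply hardcoding some fixed $v_\ast \in C$ as a parameter of the reduction---but the argument itself is sound.
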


The graph in the proof of Theorem~\ref{thm:main} can be constructed with several additional properties,
allowing our results to be stated in a sharper form.   Provided that the the function~$f$ used in the construction of Theorem~\ref{thm:main} is an injection, the graph that is constructed is acyclic.  Therefore,
sharper versions of Theorem~\ref{thm:main} and Corollary~\ref{cor:main} hold, in which ``graph'' is replaced with ``acyclic graph.''

We can also characterize the connectivity of the graph. A graph is {\emph{bounded}} if there is a function $h\colon V \to V$ such that whenever $(v_1 , v_2)$ is an edge, it follows that $v_2 < h(v_1)$.  A computable graph with a computable bounding function is called {\emph{highly computable}} (or {\emph{highly recursive}}).  If the construction of Theorem~\ref{thm:main} is modified by replacing each edge from $\ver{\sigma}{i}$ to $\ver{\sigma \cat \seq{j}}{0}$ by an edge from $\ver{\sigma}{i}$ to $\ver{\sigma \cat \seq{j}}{i}$, $\rca$ proves that the resulting graph is bounded.  Therefore, a variant Theorem~\ref{thm:main} holds for countable bounded acyclic graphs, and a variant of Corollary~\ref{cor:main} holds for highly computable acyclic graphs.


\section{Graphs with finitely many components}

The graph constructed in Theorem~\ref{thm:main} has infinitely many connected components. 
In this section, we prove two results addressing graphs with finitely many connected components. 
These results show that, although decompositions of these graphs into components can be formed computably,
there are subtleties with the manner in which the bound on the number of components is stated. 

The first result applies to graphs for which there is a finite set of vertices $V_0$ such that each vertex
of $G$ is either in $V_0$ or path connected to at least one element of $V_0$.  Given such a set, $\rca$ can prove that $G$ can be decomposed into its connected components, that is, there is an $f\colon V \to \nat$ such that for all distinct vertices
$v_1$ and $v_2$, $f(v_1 ) = f(v_2 )$ if and only if $v_1$ is path connected to $v_2$.  Note that if $v_0$ is a vertex
and $f$ is such a decomposition, then $\rca$ proves the existence of the set $\{ v \in V : f(v) = f(v_0)\}$, which is
exactly the connected component containing $v_0$.

\begin{theorem}\label{thm:finite}
The following is provable in $\rca$. If $G = (V , E )$ is a countable graph and there is a finite set 
$V_0 = \{ v_0 , v_1 , \dots v_{n-1} \}$ of vertices of $G$ such that each vertex of $G$ is either in $V_0$ or path connected to at least one element of $V_0$, then $G$ can be decomposed into its connected components, and consequently $G$ has a connected component.
\end{theorem}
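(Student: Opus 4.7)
Write $v \sim w$ for ``$v$ is path-connected to $w$'' throughout this plan. The plan is to argue by induction on $n = |V_0|$. The base case $n = 1$ is immediate: with $V_0 = \{v_0\}$, the hypothesis forces $v \sim v_0$ for every $v$, so $V$ is a single component and the constant function $f \equiv 0$ is a decomposition.

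For the inductive step with $V_0 = \{v_0, \ldots, v_n\}$, I would apply the law of excluded middle to the $\Sigma^0_1$ sentence $\exists i < n\,(v_i \sim v_n)$. If this holds, then $V_0' = \{v_0, \ldots, v_{n-1}\}$ is still a hitting set---any vertex reaching $V_0$ only through $v_n$ also reaches some $v_i \in V_0'$ by concatenating with a path from $v_n$ to $v_i$---so the inductive hypothesis applied to $(G, V_0')$ furnishes $f$ directly.

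In the complementary case $\forall i < n\,(v_i \not\sim v_n)$, the crucial point is that the predicate $v \sim v_n$ becomes $\Delta^0_1$ in $\rca$ under this extra assumption. For if $v \sim v_n$ then no $v_i$ with $i < n$ can satisfy $v_i \sim v$ (else $v_i \sim v_n$, contradicting the case hypothesis); conversely, the main hypothesis gives $v \sim v_j$ for some $j \le n$, so if no $v_i$ with $i < n$ works then $j = n$. Thus $v \sim v_n$ is equivalent to the $\Pi^0_1$ statement $\forall i < n\,(v \not\sim v_i)$, which together with the original $\Sigma^0_1$ definition gives a $\Delta^0_1$ description of $\{v \in V : v \sim v_n\}$. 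The complementary set $V'' = \{v \in V : v \not\sim v_n\}$ is then $\Delta^0_1$ and carries $V_0'$ as a hitting set, so the inductive hypothesis yields a decomposition $f''$ on the induced subgraph; extend $f''$ by sending every $v \sim v_n$ to a fresh label $n$.

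The main obstacle I anticipate is formalizing the induction inside $\rca$. The natural induction statement quantifies over graphs, hitting sets, and decomposing functions and is thus $\Pi^1_2$, outside the $\Sigma^0_1$-induction scheme. I would handle this by treating $G$ as a parameter and carrying out the induction on $n$ alone via a primitive-recursive construction that encodes, at each step, the explicit case analysis of the relevant $\Sigma^0_1$ sentence. A minor secondary issue is that $V''$ may be finite, so the inductive hypothesis (or a preliminary remark) should accommodate graphs with finite vertex sets; this poses no real trouble, since a finite graph admits a direct listing of its components.
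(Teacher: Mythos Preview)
Your inductive strategy is mathematically sound, but the gap you identify in the formalization is real and your proposed fix does not close it. Fixing $G$ as a parameter does not help: in the ``no'' branch you pass to the induced subgraph on $V''$, so the inductive hypothesis you need concerns a different graph, and you are back to quantifying over graphs. The ``primitive-recursive construction'' suggestion fails for a more basic reason: at each of the $n$ steps you must branch on the $\Sigma^0_1$ sentence $\exists i<k\,(v_i \sim v_k)$, and a primitive recursion cannot branch on an undecidable predicate---there is no computable step function that, given the current data, tells you which case obtains.

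The paper avoids induction altogether. It lists all $2^n$ subsets of $V_0$, ordered so that larger subsets precede smaller ones, and applies the $\Pi^0_1$ least-element principle (available in $\rca$) to the formula ``no finite path connects two distinct elements of $S_k$.'' The least $k_0$ satisfying this yields a maximal totally disconnected $S_{k_0} \subseteq V_0$; then $f(v)$ is defined to be the unique element of $S_{k_0}$ path-connected to $v$, which is $\Delta^0_1$. Your argument can be rescued along the same lines: use bounded $\Sigma^0_1$ comprehension to form the finite set $X = \{(i,j): i,j<n \text{ and } v_i \sim v_j\}$ up front; once $X$ is in hand, every case split in your recursion becomes decidable and the $n$-step construction goes through directly, with no appeal to an induction scheme. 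Unwound, your recursion simply computes the set $\{k<n: (\forall j<k)\,v_k \not\sim v_j\}$ of least representatives---essentially the same object as the paper's $S_{k_0}$.
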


\begin{proof}
We work in $\rca$.  Let $G$ and $V_0$ be as described above. Form an enumeration $S_ 0 , S_1 , \ldots , S_{2^n -1}$ of the subsets of $V_0$, ordered so that larger sets come first, that is, if $S_i \subseteq S_j$ then $j\leq i$.  
Let $\varphi(k)$ denote the
formula that asserts that for every finite path $p$ in $G$, $p$ is not a path between distinct elements of $S_k$.
If $S_k$ is a singleton,
then $\varphi(k)$ holds.  Note that $\varphi(k)$ is a $\Pi^0_1$ formula.
By an easy extension of Theorem~A of Paris and Kirby~\cite{Paris} to second order arithmetic,  $\Sigma^0_1$ induction implies
the $\Pi^0_1$ least element principle.  Apply this principle to find the least $k_0$ such that $\varphi(k_0 )$ holds.  Because $S_{k_0}$ is totally disconnected and $k_0$ is minimal with this property, $S_{k_0}$ is a maximal totally disconnected subset of $V_0$, and so every vertex not in $S_{k_0}$ is path connected to exactly one vertex in~$S_{k_0}$.  Define
the function $f\colon V \to \nat$ by $f(v)=v$ if $v \in S_{k_0}$, and $f(v)$ is the unique element of $S_{k_0}$ that is path connected to $v$ otherwise.
The function $f$ exists by $\Delta^0_1$ comprehension and is a decomposition of $G$ into connected components.  By the comments
preceding the statement of the theorem, $G$ has a connected component.
\end{proof}

Our second result on graphs with finitely many connected components uses the weaker hypothesis that there is some $n$ such that every collection of $n$ vertices must include at least one path connected pair.  Under this weaker assumption, the $\Sigma^0_2$ induction scheme is required to prove the existence of a decomposition 
into connected components, or even the existence of a single component.

\begin{theorem}\label{thm:finite2}
$\rca$ proves that the following are equivalent:
\begin{enumlist}
\item ${\sf I} \Sigma^0_2$, the induction scheme for $\Sigma^0_2$ formulas with set parameters.
\item If $G = (V,E)$ is a countable graph and there is some $n$ such that every collection of $n$ vertices includes at least
one path connected pair, then $G$ can be decomposed into its connected components.
\item If $G = (V,E)$ is a countable graph and there is some $n$ such that every collection of $n$ vertices includes at least
one path connected pair, then $G$ has a connected component.
\end{enumlist}
\end{theorem}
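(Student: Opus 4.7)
The plan is to prove the three implications $(1) \Rightarrow (2) \Rightarrow (3) \Rightarrow (1)$ over $\rca$.

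For $(1) \Rightarrow (2)$, I would argue in $\rca + \mathsf{I}\Sigma^0_2$ in the style of the proof of Theorem~\ref{thm:finite}, but replacing the $\Pi^0_1$ formula used there with a $\Sigma^0_2$ one. Given $G$ satisfying the hypothesis, let $\psi(k)$ be the $\Sigma^0_2$ formula asserting that there exist $k$ pairwise non-path-connected vertices of $G$. By hypothesis $\psi(n)$ fails, while $\psi(1)$ holds trivially. Since $\mathsf{I}\Sigma^0_2$ is equivalent over $\rca$ to bounded $\Sigma^0_2$ comprehension, the set $\{k \leq n : \psi(k)\}$ exists and has a largest element $k^* < n$. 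Fix a witnessing tuple $v_1,\dots,v_{k^*}$; by the maximality of $k^*$, every vertex of $G$ is path-connected to exactly one $v_i$, and the decomposition function $f(v) = i$ with $v \sim v_i$ is $\Delta^0_1$-definable via bounded search for a connecting path. The step $(2) \Rightarrow (3)$ is immediate, since the fibers of any decomposition are components.

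The reversal $(3) \Rightarrow (1)$ is the main technical content. My plan is to derive bounded $\Sigma^0_2$ comprehension. Given a $\Sigma^0_2$ formula $\sigma(k) \equiv \exists y\, \tau(k,y)$ with $\tau \in \Pi^0_1$ and a bound $m$, construct, for each $k < m$, a gadget graph $G_k$ arranged so that the number of connected components of $G_k$ encodes the truth of $\sigma(k)$. A natural design uses, for each potential witness $y$, a chain of vertices whose edges stage-by-stage approximate the $\Pi^0_1$ formula $\tau(k,y) \equiv \forall z\, \theta(k,y,z)$ via $\Delta^0_1$ conditions on $\theta$, together with shared anchor and glue vertices that prevent ``dead'' chains (those where $\tau(k,y)$ fails) from generating unbounded numbers of spurious components. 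Take $G$ to be the disjoint union of the $G_k$; with the gadgets properly engineered, the total number of components of $G$ is bounded by a fixed function of $m$, so the hypothesis of~(3) is satisfied. The reversal is completed by iteratively applying~(3) a bounded number of times to $G$ and to the subgraphs obtained by removing components already found; this bounded iteration is valid in $\rca$ given~(3) as an axiom. From the resulting full decomposition one reads off, for each $k < m$, whether $G_k$ contributed the ``extra'' component, and forms $\{k < m : \sigma(k)\}$ by $\Delta^0_1$ comprehension relative to the obtained component sets.

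The hard part I expect is the engineering of the gadget $G_k$. Because a $\Delta^0_1$-definable edge relation makes path-connectedness intrinsically $\Sigma^0_1$, the $\Sigma^0_2$ condition $\sigma(k)$ cannot be encoded simply as ``two fixed vertices are path-connected.'' Instead the encoding must exploit the fact that ``$G_k$ has at most $j$ components'' is a $\Pi^0_2$ invariant, matching the quantifier complexity of $\sigma(k)$, and that the component provided by~(3) carries $\Pi^0_2$-level information about non-path-connectedness (turning it into $\Delta^0_1$ membership once the component is realised as a set). The combinatorial heart of the proof is showing that the chain-with-anchor design actually achieves the required equivalence between ``$G_k$ has two components'' and $\sigma(k)$, and that the bound on total components of $G$ indeed holds provably in $\rca$.
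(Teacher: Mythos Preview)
Your argument for $(1)\Rightarrow(2)$ is fine and is essentially a dual reformulation of the paper's: the paper applies the $\Pi^0_2$ least element principle to ``every set of $n_0$ vertices contains a path-connected pair,'' while you apply bounded $\Sigma^0_2$ comprehension to ``there exist $k$ pairwise disconnected vertices.'' Both yield a maximal totally disconnected finite set $V_0$ and then invoke Theorem~\ref{thm:finite}.

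The reversal $(3)\Rightarrow(1)$, however, has a genuine gap. Your plan is to build a disjoint union of gadgets $G_k$ for $k<m$ with a total of at most $N=N(m)$ components, and then ``iteratively apply~(3) a bounded number of times'' to peel off all the components. The problem is that this bounded iteration is \emph{not} available in $\rca$: the statement ``there exists a coded sequence of sets $\langle C_1,\dots,C_i\rangle$ such that each $C_j$ is a component of $G\setminus(C_1\cup\cdots\cup C_{j-1})$'' is $\Sigma^1_1$ in $i$, and proving it for all $i\le N$ by induction requires $\Sigma^1_1$-induction, far beyond $\rca$. Since $m$ (and hence $N$) is a variable of the model, possibly nonstandard, you cannot appeal to external iteration either. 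Each single application of~(3) is fine, but collecting the resulting components into one object is exactly where the argument breaks. Note also that the components of any reasonable gadget encoding a $\Pi^0_1$ condition will be infinite sets, so there is no hope of coding them as numbers to sidestep the issue.

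The paper's reversal is engineered precisely to avoid this: it derives the $\Sigma^0_2$ \emph{least element} principle rather than bounded comprehension, and builds a single graph (with subgraphs $G_\tau$ indexed by strictly decreasing sequences below~$k$) so that \emph{one} application of~(3) produces a component~$C$ from which the least~$m$ with $(\exists q)(\forall s)\theta(m,q,s)$ can be read off by $\Sigma^0_1$ searches alone. The nesting of the $G_\tau$ along decreasing sequences is what makes a single component carry enough information; your disjoint-union design cannot achieve this, because a single component of $\bigsqcup_k G_k$ lives entirely inside one $G_k$ and tells you nothing about the others.
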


\begin{proof}
We will work in $\rca$ throughout.  The equivalence of (1) and (2) is Theorem~4.5 of Hirst~\cite{hirst1992}.  We can capitalize on
our work above to provide a succinct alternative proof that (1) implies (2).  Suppose $G$ is as in (2).  By the $\Pi^0_2$ least element principle
(which can be deduced from (1) in $\rca$ as in Theorem~A of Paris and Kirby \cite{Paris})
 there is a least $n_0$ such that every collection of $n_0$ vertices contains a path
connected pair.  By the minimality of $n_0$, there is a set $V_0$ of $n_0 -1$ vertices no two of which are path connected.
For any vertex $v$, if $v\notin V_0$, then $V_0 \cup \{v\}$ has cardinality $n_0$, so $v$ is path connected to an element of
$V_0$.  By Theorem~\ref{thm:finite}, $G$ can be decomposed into its connected components.

The proof that (2) implies (3) follows immediately from the comments in the paragraph preceding Theorem~\ref{thm:finite}.  It
remains to show that (3) implies (1).  By the second order analog of Theorem~A of Paris and Kirby~\cite{Paris}, it suffices to use
(3) to deduce the $\Sigma^0_2$ least element principle.  Fix a $\Sigma^0_0$ formula~$\theta(k,q,s)$, which may have set parameters. Suppose that there is a $k$ for which $(\exists q)(\forall s) \theta (k,q,s)$ holds.  We will show that there is a least integer $m$ such that $(\exists q)(\forall s) \theta (m,q,s)$ holds by constructing a graph and finding a connected component.

For each $j<k$ and each $n$, let $b(n,j)$ be the least number $b$ (if any such number exists) such that:
\begin{enumerate}
\item if  $n >0$, then $b(n-1,j)$ is defined and $b > b(n-1,j)$, and 
\item there is some $s_0 < b$ such that $\neg \theta (j,n,s_0)$ holds.  
\end{enumerate}
 Informally, $b(n,j)$ is a bound for a witness that $(\forall s) \theta (j,n,s)$ fails. For each~$j$, $(\exists q)( \forall s) \theta (j,q,s)$ holds if and only if only there are only finitely many $n$ for which $b(n,j)$ is defined. 
The characteristic function for the predicate $t = b(n,j)$ is definable by primitive recursion,
so $\rca$ proves its existence.  (See Theorem~II.3.4 of Simpson~\cite{simpson}.)  Whenever $t=b(n,j)$ holds,
we have $n\le t$, so $(\exists n) [t=b(n,j)]$ is equivalent to $(\exists n \le t)[t=b(n,j)]$, a bounded quantifier applied
to a primitive recursive predicate.  We will use the formula $(\exists n) [t=b(n,j)]$
in the construction of our graph and the bounded quantifier form in verifying that $\rca$ proves the existence of the graph.
 
\proofheader{Construction}
The graph $G$ (see Figure \ref{fig:finite2})
will consist of a root vertex $\rho$ and subgraphs $G_\tau$ for each nonempty strictly decreasing sequence~$\tau$ of numbers less than~$k$. There will be $2^k -1$ such sequences. The vertices of $G_\tau$ will be
$\{ \uver{\tau}{n} : n \in \nat\}$ and $\{ \ver{\tau}{n} : n \in \nat\}$.  Next, we will define $E$, the set of edges of $G$.
For a sequence consisting of a single number $j<k$, add the following edges associated with the subgraph $G_\seq{j}$:
\begin{list}{$\bullet$}{}
\item  $(\rho , \uver{\seq{j}}{0} ) \in E$.  (This edge connects a corner of $G_\seq{j}$ to the rest of $G$.)
\item $( \uver{\seq{j}}{i} , \uver {\seq{j}}{ i+1} ) \in E$.
\item If $\neg (\exists n)[ t = b(n,j)]$ then $(\ver{\seq{j}}{t} , \ver{\seq{j}}{ t+1} ) \in E$.
\item  If $(\exists n) [ t = b(n,j)]$ then $( \uver{\seq{j}}{t}, \ver{\seq{j}}{t} ) \in E$.
\end{list}
If $\tau$ is a strictly descending sequence and $j$ is less than the last element of~$\tau$, then add the following edges for $G_{\tau\cat \seq{j}}$:
\begin{list}{$\bullet$}{}
\item  $(\ver{\tau}{ 0} , \uver {\tau \cat \seq{j} }{ 0 } ) \in E$.  (This edge connects a corner of $G_{\tau\cat \seq{j}}$ to $G_\tau$).
\item  If $(\ver {\tau }{ t} , \ver{\tau}{ t+1} ) \in E$, then put edge $(\uver {\tau \cat\seq{j}}{ t} , \uver{\tau \cat\seq{j}}{ t+1} )$ into~$E$. Also, copy the structure of $G_\seq{j}$, that is:
\begin{list}{$\bullet$}{}
\item  if $\neg (\exists n)[t = b(n,j)]$ then $(\ver{\tau \cat \seq{j} }{ t}, \ver{\tau\cat \seq{j} }{ t+1 }) \in E$, and
\item if $(\exists n ) [ t= b(n,j)]$ then $(\ver{\tau\cat \seq{j} }{ t }, \uver {\tau \cat \seq{j} }{ t} ) \in E$.
\end{list}
\item If $(\ver {\tau }{ t} , \ver{\tau}{ t+1} ) \notin E$, then
``cap'' the construction of $G_{\tau \cat \seq{j} }$ by adding $(\uver{\tau\cat \seq{j} }{t} ,\ver{\tau\cat \seq{j}}{ t}) \in E$, and
``link'' the continuation of $G_{\tau\cat \seq{j}}$ to $G_\tau$ by adding $(\ver{\tau }{ t+1 } , \uver{\tau\cat \seq{j} }{ t+1 }) \in E$. \end{list}
This completes the definition of $G$.

Figure \ref{fig:finite2} illustrates a typical construction for $G$.  In the figure, the vertical edge from $\uver{\seq{0}}{1}$ to
$\ver{\seq{0}}{1}$ in
the subgraph $G_\seq{0}$ shows that 
$0$ does not witness that $(\exists q )(\forall s) \theta (0,q,s)$, and the
similar vertical edge from $\uver{\seq{1}}{3}$ to $\ver{\seq{1}}{3}$ in $G_\seq{1}$ shows that $0$ does not
witness $(\exists q )(\forall s) \theta (1,q,s)$.  The subgraph $G_\seq{1,0}$ is a copy of $G_\seq{0}$, except at locations where $G_\seq{1}$ has
a vertical edge,  in which case a cap and a link are added.  If $(\forall s) \theta (1, 1 , s)$ and $\neg( \exists n)(\forall s )\theta (0,n,s)$, then
the component of $G$ containing the vertex $ \ver{\seq{1}}{4}$ would be disconnected from the 
component containing $\rho$, and would contain an isomorphic copy of the portion of $G_\seq{0}$
to the right of $\uver{\seq{0}}{4}$.  Based on the assumption that $\neg( \exists q)( \forall s )\theta (0,q,s)$, this copy of
the right portion of $G_\seq{0}$ would lie entirely in the same component as  $\ver{\seq{1}}{4}$.

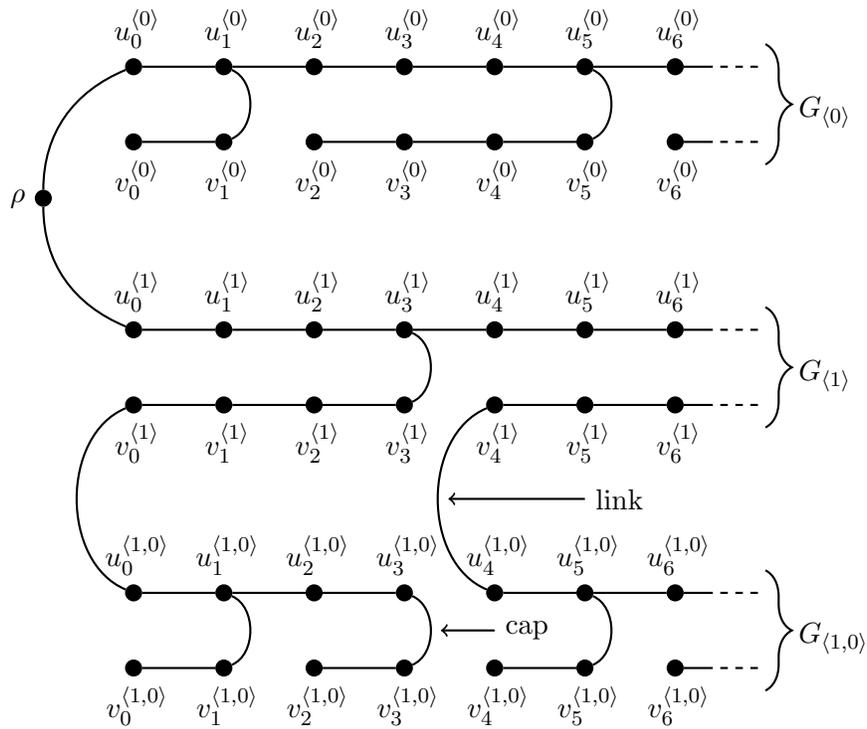
\begin{figure}
\begin{center}
\begin{tikzpicture}[x=1cm,y=1cm,thick]

\tikzstyle{dot}=[draw,circle,inner sep=2pt,fill]

%

\tikzstyle{labell}=[left,shift={(-0.1,0)}]
\tikzstyle{labelr}=[right]
\tikzstyle{labela}=[above,shift={(0.05,0.1)}]
\tikzstyle{labelb}=[below,shift={(0.05,-0.1)}]

\newcommand{\hgap}{1.2}

\newcommand{\biggap}{3.5}
\newcommand{\smallgap}{1}

\node[dot] (r) at (0,-0.5*\biggap) {};
\node[labell] at (r) {$\rho$};

\foreach \y/\l in {0/0,1/1,2/{1,0}} 
{ 

\foreach \x in {0, 1, 2, 3, 4, 5, 6} 
{
	\node[dot] (u\y-\x) at (\hgap + \hgap*\x,0- \biggap*\y) {};
	\node[labela] at (u\y-\x) {$\uver{\seq{\l}}{\x}$};
}

\foreach \x in {0, 1, 2, 3, 4, 5, 6} 
{
	\node[dot] (v\y-\x) at (\hgap + \hgap*\x,-\smallgap-\biggap*\y) {};
	\node[labelb] at (v\y-\x) {$\ver{\seq{\l}}{\x}$};
}

\foreach \f/\t in {0/1,1/2,2/3,4/5,5/6} 
{
  \draw (u\y-\f)--(u\y-\t);
}

\draw (u\y-6)--(7*\hgap+0.5,-\biggap*\y);
\draw (7*\hgap+0.6,-\biggap*\y)--(7*\hgap+0.7,-\biggap*\y);
\draw (7*\hgap +0.8,-\biggap*\y)--(7*\hgap + 0.9,-\biggap*\y);
\draw (7*\hgap + 1,-\biggap*\y)--(7*\hgap + 1.1,-\biggap*\y);

\draw (v\y-6)--(7*\hgap + 0.5,-\smallgap-\biggap*\y);
\draw (7*\hgap + 0.6,-\smallgap-\biggap*\y)--(7*\hgap + 0.7,-\smallgap-\biggap*\y);
\draw (7*\hgap + 0.8,-\smallgap-\biggap*\y)--(7*\hgap + 0.9,-\smallgap-\biggap*\y);
\draw (7*\hgap + 1,-\smallgap-\biggap*\y)--(7*\hgap + 1.1,-\smallgap-\biggap*\y);

\draw [thick,decorate,decoration={brace,mirror,amplitude=10pt}]
 (7*\hgap + 1.2,-0.3-\smallgap-\biggap*\y) -- (7*\hgap + 1.2,0.3-\biggap*\y) node[midway,xshift=8mm] {};

\node[right,yshift=-1mm] at (7*\hgap + 1.5,-0.5\smallgap - \biggap*\y) { $G_{\seq{\l}}$};

}

\draw (u0-3)--(u0-4);
\draw (u1-3)--(u1-4);

\draw (v0-0)--(v0-1);
\draw (v0-2)--(v0-3)--(v0-4)--(v0-5);

\draw (v1-0)--(v1-1)--(v1-2)--(v1-3);
\draw (v1-4)--(v1-5)--(v1-6);

\draw (v2-0)--(v2-1);
\draw (v2-2)--(v2-3);
\draw (v2-4)--(v2-5);

\draw[thick] (r) to[out=90,in=202.5] (u0-0);
\draw[thick] (r) to[out=-90,in=157.5] (u1-0);

\draw (v1-0) to[out=202.5,in=157.5] (u2-0);
\draw (v1-4) to[out=202.5,in=157.5] (u2-4);

\draw (u0-1) to[out=-22.5,in=22.5] (v0-1);
\draw (u0-5) to[out=-22.5,in=22.5] (v0-5);
\draw (u1-3) to[out=-22.5,in=22.5] (v1-3);

\draw (u2-1) to[out=-22.5,in=22.5]  (v2-1);
\draw (u2-3) to[out=-22.5,in=22.]  (v2-3);
\draw (u2-5) to[out=-22.5,in=22.]  (v2-5);

\node[right] (link) at (6*\hgap,  -1.5*\biggap + -0.5*\smallgap) {link};
\node[right] (m2) at (4*\hgap + 0.3,  -1.5*\biggap + -0.5*\smallgap) {};

\draw[->] (link)--(m2);

\node[right] (cap) at (5*\hgap,  -2*\biggap + -0.5*\smallgap) {cap};
\node[right] (m1) at (4*\hgap + 0.2,  -2*\biggap + -0.5*\smallgap) {};
\draw[thick,->] (cap)--(m1);

\end{tikzpicture}

\end{center}
\caption{Illustration of a typical piece in the construction in Theorem~\ref{thm:finite2}.}\label{fig:finite2}
\end{figure}

\proofheader{Verification}
We claim that $\rca$ proves that $G$ exists.  The set of codes for vertices is $\Delta^0_1$ definable.  In the definition
of the set of edges, the quantified formulas involving $t=b(n,j)$ are equivalent to formulas with bounded quantifiers.
Finally, inclusion of any edge with a vertex in $G_{\tau\cat \seq{j}}$ depends only on a bounded initial segment of
$G_\tau$, so $\rca$ proves the existence of the set of edges.

To verify that $G$ has finitely many connected components, we will show that any collection
of $2^k (k+3) + 1$ vertices includes at least one path connected pair.  (This is an inexact but convenient upper bound.)
First note that for each $j$, $G_{\seq{j}}$ has one connected component if $\neg (\exists q)(\forall s) \theta(j,q,s)$ and
two components if $(\exists q)(\forall s) \theta (j,q,s)$.  Thus, for each $j$, $G_{\seq{j}}$ has at most two connected components.
Suppose $W_d = \{ w_0^d , w_1^d, \dots w_n^d \}$ is a sequence of vertices in $G_{\seq{j_0, j_1, \dots j_d}}$, for some $n$, such that
no pair is connected by a path in $G$.  Because at most one connected component of $G_{\seq{j_0, j_1, \dots j_d}}$ is not
connected to $G_{\seq{j_0, j_1, \dots j_{d-1}}}$, by successively examining finite paths in $G$ we can discover the
first $n-1$ vertices of $G_{\seq{j_0, j_1, \dots j_{d-1}}}$ that are connected to distinct elements of $W_d$.
Let $W_{d-1} = \{ w_0^{d-1} , \dots , w_{n-1}^{d-1} \}$ be the sequence of these vertices.  Because no two vertices
in $W_d$ are connected by a path, no two vertices in $W_{d-1}$ are connected by a path.
$\rca$ can prove
the existence of the sequence of codes for the finite sets $W_d, W_{d-1}, \dots W_0$.  The $\Sigma^0_1$ least
element principle suffices to prove that in each of these sets, no two vertices are path connected.  Because
$W_0$ contains at most two vertices, $W_d$ contains at most $d+2$ vertices.  Generalizing, for any
strictly descending sequence $\tau$ of values less than $k$, any totally disconnected set of vertices in $G_\tau$ must
be of size at most $k+2$.  Now suppose $V_0$ is a subset of the vertices of $G$ of size $2^k (k+3)+1$.
One of these vertices may be $\rho$, but at least $2^k (k+3)$ of them lie in the $2^k -1$ subgraphs of the form $G_\tau$.
Because $\rca$ proves that the sum of less than $2^k$ numbers each less than $k+3$ is less than $2^k (k+3)$,
some single subgraph $G_\tau$ must contain at least $k+3$ vertices.  Because no such collection of $k+3$ vertices
can be totally disconnected, some pair must be connected by a path in $G$.  Thus every collection of
$2^k(k+3)+1$ vertices of $G$ contains at least one path connected pair.

Apply item (3) to find a connected component
of $G$.  Let $C$ denote the set of vertices in this component.  We will use $C$ to find the least $m$ such that
$(\exists q)(\forall s) \theta (m,q, s)$.  In the following argument, each selection of a least element is a consequence of
the $\Sigma^0_1$ least element principle.  If $\rho \in C$ and there is a $j$ such that $\ver{\seq{j}}{n} \notin C$ for some $n$,
then let $m$ be the least such~$j$.  If no such $j$ exists, let $m=k$.  If $\rho \notin C$, then there is a shortest
sequence $\tau$ and a least $n_0$ such that $\ver{\tau}{n_0} \in C$.  Because $\tau$ is decreasing,
$\min( \tau )$ is the last element of $\tau$.  If there is a $j< \min (\tau )$ such that $\ver{\tau\cat \seq{j} }{ n } \notin C$ for some~$n>n_0$,
then let~$m$ be
the least such~$j$.  If no such~$j$ exists, let~$m= \min{\tau}$.
In each of these cases, $m$ is the least integer such that $(\exists q )(\forall s )\theta (m,q,s)$, as desired.
This completes the proof that (3) implies (1) and completes the proof of the theorem. 
\end{proof}

The previous theorem extends Theorem~4.5 of Hirst~\cite{hirst1992}. In the context of classical (nonuniform) computability theory, however, there is no issue with induction, and Corollary 4.3 of Hirst~\cite{hirst1992} shows that each computable graph with finitely many connected components
has a computable decomposition into its connected components, using either formulation of the bound on the number of components.  Theorem~\ref{wr5} shows that situation is more complicated in the context of Weihrauch reducibility. 


\section{Graphs in which every component is finite}\label{section4}

In this section, we turn to a problem that is, in a certain sense, dual to the problem of the previous section. Graphs in which there are only finitely many components must have at least one infinite component. We now consider graphs in which every component is finite.  For such graphs,
the existence of a single component is trivial, so we turn to principles that imply the existence of
infinitely many connected components. There are several seemingly natural ways to state such principles. We show that three are equivalent to each other, and to $\aca$, over $\rca$. Recall that the \textit{canonical index} for a finite set $\{a_0 < a_1 < \cdots < a_k\}$ is the number $\prod_{i \leq k} p_i^{a_i}$, where $p_i$ is the $i$th prime number.

\begin{defn} The following principles are defined in $\mathsf{RCA}_0$.
\begin{itemize}
\item $\mathsf{FC}$-1: Every infinite graph in which every component is finite has a sequence $\langle d_1, d_2, \ldots\rangle$ of canonical indices of different components.
\item $\mathsf{FC}$-2: Every infinite graph in which every component is finite has a family $\langle C_i : i \in \mathbb{N}\rangle$ of distinct connected components. The family is given as a function $C\colon \mathbb{N}^2 \to \{0,1\}$.
\item $\mathsf{FC}$-3:  Every infinite graph in which every component is finite has an infinite totally disconnected set.  
\end{itemize}
\end{defn}
Principle $\mathsf{FC}$-3 can be stated equivalently ``every infinite graph has either an infinite component or an infinite totally disconnected set.'' 

\begin{lemma}\label{lem:fcforward} The following implications hold over $\mathsf{RCA}_0$:
\[
\aca \Rightarrow \mathsf{FC}\text{-}1 \Rightarrow \mathsf{FC}\text{-}2 \Rightarrow \mathsf{FC}\text{-}3.
\]
\end{lemma}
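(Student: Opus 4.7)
My plan is to prove the three implications in turn. The first, $\aca \Rightarrow \mathsf{FC}\text{-}1$, is where genuine arithmetical comprehension enters; the remaining two should go through in $\rca$ with some care about which sets can actually be formed.

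For $\aca \Rightarrow \mathsf{FC}\text{-}1$, I would invoke Theorem~2.5 of Hirst~\cite{hirst1992} to obtain in $\aca$ a function $f\colon V \to \nat$ with $f(v) = f(v')$ if and only if $v$ and $v'$ lie in the same connected component. Since every component is finite, for each $v$ the set $\{u \in V : f(u) = f(v)\}$ is finite, and its canonical index $c(v)$ is arithmetically definable from $f$. I would then recursively enumerate distinct components: let $v_0$ be the least vertex of $V$ and set $d_1 = c(v_0)$; given $d_1, \dots, d_i$, let $v_i$ be the least vertex with $c(v_i) \notin \{d_1, \dots, d_i\}$ and set $d_{i+1} = c(v_i)$. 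Such a $v_i$ always exists because the graph is infinite while each component is finite, so the graph must contain infinitely many components.

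The implication $\mathsf{FC}\text{-}1 \Rightarrow \mathsf{FC}\text{-}2$ is routine: decoding a canonical index into its encoded set is primitive recursive, so $\rca$ defines $C(i,v) = 1$ if and only if $v$ lies in the set coded by $d_i$, and the components are distinct because the canonical indices are. For $\mathsf{FC}\text{-}2 \Rightarrow \mathsf{FC}\text{-}3$, let $v_i = \mu v\,(C(i,v) = 1)$ be the least vertex of the $i$th component. Distinct components are disjoint, so the $v_i$ are pairwise distinct. To produce an infinite set of vertices in $\rca$, I would pass to a strictly increasing subsequence by setting $w_0 = v_0$ and $w_{k+1} = v_j$ for the least $j$ with $v_j > w_k$. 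The pigeonhole principle for finite sets (provable in $\rca$) shows that such a $j$ must exist: otherwise all $v_j$ would fit into $\{0, 1, \dots, w_k\}$, forcing a collision. The range $\{w_k : k \in \nat\}$ exists in $\rca$ because $\langle w_k\rangle$ is strictly increasing, and it is totally disconnected since distinct $w_k, w_{k'}$ lie in distinct components by construction.

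The main obstacle I anticipate is precisely the last step: in $\rca$ the range of an arbitrary injection need not exist, so one cannot simply declare $\{v_i : i \in \nat\}$ to be the desired totally disconnected set. The monotone subsequence trick above circumvents this, converting a sequence of pairwise distinct vertices into the range of a strictly increasing sequence, which $\rca$ does form.
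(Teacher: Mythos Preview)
Your proof is correct and follows essentially the same approach as the paper: decompose via Hirst's Theorem~2.5 and extract canonical indices for $\aca \Rightarrow \mathsf{FC}\text{-}1$, decode indices for $\mathsf{FC}\text{-}1 \Rightarrow \mathsf{FC}\text{-}2$, and take least vertices followed by extraction of an infinite decidable subset for $\mathsf{FC}\text{-}2 \Rightarrow \mathsf{FC}\text{-}3$. The paper merely cites the last step as ``a standard argument,'' whereas you spell out the strictly-increasing-subsequence construction explicitly; this is entirely in keeping with the paper's intent.
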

\begin{proof}
Working in $\mathsf{ACA}_0$, it is straightforward to prove $\mathsf{FC}$-1 by first decomposing the graph into components and then forming the sequence of canonical indices of the components. 

The implication of $\mathsf{FC}$-1 to $\mathsf{FC}$-2 in $\rca$ is straightforward, because we may uniformly construct the characteristic functions for a sequence of sets defined by a sequence of canonical indices. 

Finally, the implication from $\mathsf{FC}$-2 to $\mathsf{FC}$-3 proceeds by choosing, for each $i$, the least $v_i$ such that $C(i,v_i) = 1$. This gives a function $f$ with $f(i) = v_i$, with the range of $f$ being an infinite enumerated set. By a standard argument, every infinite enumerated set has an infinite (decidable) subset. In this situation, any such subset here will be an infinite totally disconnected set in the original graph. 
\end{proof}

The proof of the next theorem will require a particular technical lemma. Intuitively, this lemma formalizes a standard fact about approximations to $\emptyset'$ that is often required to verify priority arguments: for each $e$ there is a $t$ such that every program with index less than $e$ that halts must halt in no more than~$t$ steps. 
This fact is most directly formalized as a $\Sigma^0_2$ property of~$e$, so some care is necessary to prove the claim in~$\mathsf{RCA}_0$. 

\begin{lemma}\label{lem:tech}
The following is provable in~$\rca$. 
Let $f\colon \mathbb{N}\to\mathbb{N}$ be an injection, and 
$h_s(e) = \max\{n < s : f(n) \leq e\}$ for each $e$ and $s$. Then 
for each $e$ there is a $t$ such that $h_t(e') = h_s(e')$ for all $e' \leq e$ and all $s > t$. 
\end{lemma}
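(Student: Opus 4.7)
The approach I would take is to sidestep the apparent $\Sigma^0_2$ character of the stabilization claim by introducing an auxiliary counting function on which $\Sigma^0_1$ induction suffices. Specifically, I define $c(e, s) = |\{n < s : f(n) \le e\}|$, which is a bounded cardinality computation, hence $\Delta^0_0$ and available in $\rca$. Two properties are immediate and will be used throughout: $c(e, s)$ is non-decreasing in $s$, and $c(e, s) \le e+1$ for every $s$, the latter by the finite pigeonhole principle, since the restriction of $f$ to $\{n < s : f(n) \le e\}$ is an injection into $\{0, 1, \ldots, e\}$.

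The crux of the proof will be to show that, for each fixed $e$, there is some $t$ with $c(e, s) = c(e, t)$ for all $s > t$. I would argue by contradiction: assume $(\forall t)(\exists s > t)[c(e, s) > c(e, t)]$. Since the formula $\phi(k) \equiv (\exists s)[c(e, s) \ge k]$ is $\Sigma^0_1$, the $\Sigma^0_1$ induction scheme available in $\rca$ applies. The base case $\phi(0)$ is trivial; and given a witness $s_0$ for $\phi(k)$, the non-stabilization assumption supplies some $s_1 > s_0$ with $c(e, s_1) > c(e, s_0) \ge k$, so $s_1$ witnesses $\phi(k+1)$. Instantiating the resulting $\forall k\, \phi(k)$ at $k = e+2$ contradicts the pigeonhole bound.

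With such a $t$ in hand, the stabilization of $c(e, \cdot)$ translates into the statement that $f(n) > e$ for every $n \ge t$. Hence, for any $e' \le e$, also $f(n) > e'$ for every $n \ge t$, so $\{n < s : f(n) \le e'\} = \{n < t : f(n) \le e'\}$ whenever $s > t$. This gives $h_s(e') = h_t(e')$ for all $s > t$ and all $e' \le e$, completing the verification with a single $t$ serving all $e' \le e$ simultaneously.

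The main obstacle, addressed by this structure, is precisely the $\Sigma^0_2$ flavor of the natural formulation: a direct induction on $e$ of the full conclusion, or a direct appeal to the supremum of $c(e, \cdot)$, would require forms of induction beyond the reach of $\rca$. The counting-and-contradiction setup above reduces the entire argument to $\Sigma^0_1$ induction on a bounded counter, which sits comfortably inside $\rca$.
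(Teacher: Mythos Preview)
Your argument is correct, but it proceeds by a different route than the paper's. The paper fixes $e$, uses bounded $\Sigma^0_1$ comprehension to form the finite set $X = \{i \le e : (\exists s)[f(s) = i]\}$, defines $g(i) = (\mu s)[f(s) = i]$ on $X$ (and $0$ off $X$), and then invokes the strong $\Sigma^0_1$ bounding scheme to bound $g[\,[0,e]\,]$ by some $t$. Your approach replaces the comprehension-plus-bounding pair with a single $\Sigma^0_1$ induction on the bounded counter $c(e,s)$, together with the finite pigeonhole principle for the bound $c(e,s) \le e+1$. Both arguments ultimately exploit the same finiteness phenomenon---at most $e+1$ values of $f$ can fall below $e$---but yours is more self-contained (no appeal to the cited results II.3.9 and II.3.14 of Simpson), while the paper's is more direct in exhibiting \emph{which} $t$ works. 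Either fits comfortably in $\rca$.
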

\begin{proof}
We may construct $h$, as a function of $s$ and $e$, using $\Delta^0_1$ comprehension. 
Fix $e$.  By bounded $\Sigma^0_1$ comprehension (Theorem~II.3.9 of Simpson~\cite{simpson}), there is a set $X$ such that, for all $i$, $i \in X$ if and only if $i \leq e$ and $(\exists s)[f(s) = i]$.   Define a function $g \colon \mathbb{N} \to \mathbb{N}$ such that $g(i) = 0$ when $i \not \in X$ and $g(i) = (\mu s) [ f(s) = i]$ for each $i \in X$.  By the strong $\Sigma^0_1$ bounding scheme (a consequence of $\rca$ by Exercise II.3.14 of Simpson~\cite{simpson}),  the image of the interval $[0,e]$ under $g$ is bounded by some number~$t$.  Any such $t$ satisfies the claim.
\end{proof}

The proof of the next theorem can be viewed as a priority argument that has been formalized in $\rca$. 
\newpage

\begin{theorem}\label{thm:finitecomponents}
The following are equivalent over $\rca$:
\begin{itemize}
\item $\aca$.
\item $\mathsf{FC}$-$i$ for $i \in \{1,2,3\}$.
\end{itemize}
\end{theorem}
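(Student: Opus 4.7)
By Lemma~\ref{lem:fcforward}, the implications $\aca \Rightarrow \mathsf{FC}\text{-}1 \Rightarrow \mathsf{FC}\text{-}2 \Rightarrow \mathsf{FC}\text{-}3$ already hold, so the theorem reduces to deriving $\aca$ from $\mathsf{FC}\text{-}3$ over $\rca$. Working in $\rca$ and assuming $\mathsf{FC}\text{-}3$, let $f\colon\nat\to\nat$ be an arbitrary injection. By Lemma~III.1.3 of Simpson~\cite{simpson}, it will suffice to show that the range of $f$ exists.

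My plan is to construct, by $\Delta^0_1$ comprehension from $f$, a graph $G$ with the following two properties: $\rca$ proves every connected component of $G$ is finite, and from any infinite totally disconnected subset $S$ of the vertex set of $G$, the set $\text{range}(f)$ is $\Delta^0_1$-definable in $S$ and $f$. Given such a $G$, $\mathsf{FC}\text{-}3$ supplies an infinite totally disconnected set $S$, and then $\text{range}(f)$ exists by $\Delta^0_1$ comprehension relative to $S$ and $f$, yielding $\aca$.

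The graph $G$ will be organized into rows indexed by $e$, with the structure of row $e$ determined by the approximation $h_s(e) = \max\{n < s : f(n) \leq e\}$ of Lemma~\ref{lem:tech}. At each stage $s$ at which $h_s(e)$ changes, a new edge is added in row $e$ capturing that change; additional auxiliary ``signal'' vertices are attached to the row and arranged so that an infinite totally disconnected set is forced to meet each row's stabilization pattern. Since deciding whether to add any given edge depends only on a bounded segment of $f$, the edge relation is $\Delta^0_1$ in $f$ and $\rca$ can form $G$. To prove in $\rca$ that every component of $G$ is finite, one argues that each row contains only finitely many edges, and this finiteness rests on the uniform stabilization of $h_s(e)$ supplied by Lemma~\ref{lem:tech}---without which the natural statement ``every component of $G$ is finite'' would only be $\Sigma^0_2$-provable and unavailable in $\rca$. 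The whole construction can be regarded as a priority argument formalized in $\rca$: each row is a requirement that contributes its piece of $\text{range}(f)$, and the signal vertices are the witnesses by which any totally disconnected set reveals those contributions.

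The main obstacle is to balance two competing demands. On the one hand, the edge relation must be $\Delta^0_1$ in $f$ and the finiteness of all components must be provable in $\rca$; this is precisely why Lemma~\ref{lem:tech} is needed, since it packages the required stabilization information in a form that $\rca$ can exploit without appealing to $\Sigma^0_2$ induction. On the other hand, the graph must be rigid enough that every infinite totally disconnected set, regardless of how it is presented, encodes enough about the approximation $h_s$ to reconstruct $\text{range}(f)$; this is what the auxiliary signal vertices accomplish. Verifying that these two demands can be met simultaneously---through the use of Lemma~\ref{lem:tech} and the $\Sigma^0_1$ least element principle available in $\rca$---is the heart of the argument.
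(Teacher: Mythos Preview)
Your high-level plan is right and matches the paper: reduce to showing $\mathsf{FC}\text{-}3 \Rightarrow \aca$, fix an injection $f$, build a graph with only finite components so that any infinite totally disconnected set computes $\operatorname{range}(f)$, and use Lemma~\ref{lem:tech} to keep the verification inside~$\rca$. But the construction you sketch---separate ``rows indexed by $e$'' with signal vertices attached---is not what the paper does, and as described it has a real gap. If the rows are disjoint subgraphs, an infinite totally disconnected set may simply avoid row $e$ altogether, leaving you with no information about whether $e$ lies in $\operatorname{range}(f)$. You assert that the signal vertices can be ``arranged so that an infinite totally disconnected set is forced to meet each row's stabilization pattern,'' but this is exactly the hard part, and you have not said how to do it while keeping every component finite. (Making each row infinite so that it must be visited reintroduces infinite components; making each row finite lets the set skip rows.)

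The paper resolves this by abandoning the row picture entirely. There is a single line of vertices $v_0, v_1, v_2, \ldots$, one new vertex per stage, and a \emph{global} priority: at stage $s+1$ the least $e$ whose approximation $h_s(e)$ has moved since its component was last extended ``requires attention,'' and $v_{s+1}$ is joined to every $v_i$ with $i \ge e$. An easy induction shows every component is an interval $\{v_m,\ldots,v_n\}$; Lemma~\ref{lem:tech} then shows each such interval is finite. Because the components are intervals, \emph{any} infinite totally disconnected set $A = \{a_0 < a_1 < \cdots\}$ must contain one vertex from each of infinitely many consecutive intervals, and the right endpoint $s(i)$ of $a_i$'s interval is a stage by which $h_s(e(i))$ has stabilized. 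That is what lets you decide $n \in \operatorname{range}(f)$ by looking only at $f \upharpoonright s(i)$ for the first $a_i$ past $n$. The crucial idea you are missing is this interleaving of all requirements onto one line so that no infinite set can avoid them.
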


\begin{proof}
We work in $\mathsf{RCA}_0$. By Lemma~\ref{lem:fcforward}, it is sufficient to assume $\mathsf{FC}$-$3$ and prove that the range of an arbitrary injection $f\colon \mathbb{N}\to\mathbb{N}$ exists. As in 
Lemma~\ref{lem:tech}, $\Delta^0_1$ comprehension is sufficient to define the function 
$h_s(e) = \max\{n < s : f(n) \leq e\}$.  

\proofheader{Construction}
We form a countable graph $G$ with vertex set  $V = \{v_i : i \in \mathbb{N}\}$. We say that $i$ is the \textit{index} of vertex~$v_i$.  We construct a sequence $\langle G_s : s \in \mathbb{N}\rangle$ of finite graphs
such that the vertices of $G_s$ are $\{v_i : i \leq s\}$ and such that if $t > s$ and $v_i,v_j$ are in $G_s$ then edge $(v_i,v_j)$ is in $G_t$ if and only if it is in $G_s$. Then $G = \bigcup_{s \in\nat} G_s$ can be constructed in $\mathsf{RCA}_0$.  To begin, let $G_0$ be a graph with the single vertex $v_0$ and no edges.

We say that a vertex $v_e$ of $G_s$ \textit{requires attention at stage} $s+1$ if there is no $v_i$ in the same component of $G_s$ as $v_e$ with $i < e$ (so $v_e$ has the smallest index of all vertices in its component in $G_s$), and also $h_s(e) > h_t(e)$, where $t$ is maximal such that $v_t$ is in the same component as $v_e$ in $G_s$.  This is a $\Sigma^0_0$ property of $e$ and $s$. If no vertex in $G_s$ requires attention at stage $s+1$, let $G_{s+1}$ be $G_s$ with one additional vertex $v_{s+1}$ and no additional edges.   Otherwise, there is a minimum $e$ such that $v_e$ requires attention at stage $s+1$. In this case, let $G_{s+1}$ be $G_s$ with one new vertex $v_{s+1}$ and an edge between $v_{s+1}$ and $v_i$ when $e \leq i \leq s$.

\proofheader{Verification}
It is straightforward to prove by induction that the vertices of $G_s$ are exactly $\{v_0, \ldots,v_s\}$, and that for all $v_i, v_j$ and all $s$ with $i < j < s$, $v_i$ and $v_j$ are adjacent in  $G_s$ if and only if they are adjacent in~$G_j$.  We will show that each component of $G$ is of the form $\{v_i \in V: m \leq i < n\}$ for some $m < n$.

We first show that each component of $G$ is finite. For this, it is enough to prove that for each component $C$ of $G$ there is an $r$ such that $C$ is a component of $G_r$ (in particular, $C \subseteq G_r$).  Given a component $C$ of $G$, choose $e$ to be minimal such that $v_e \in C$. Choose $t$ as in Lemma~\ref{lem:tech}, so that $h_t(e') = h_s(e')$ for all $e' \leq e$ and all $s > t$.  No vertex $v_{e'}$ with $e' < e$ can require attention 
at or after stage $e$, because otherwise $v_e$ would be in the component of $v_{e'}$ in $G$. Furthermore,
vertex $v_e$ does not require attention at any stage $s > t$, by the choice of~$t$. Thus no new vertices
can be added to the component of $v_e$ after stage $r = \max\{e,t\}$. 

To show that each connected component of each $G_s$ is of the form $\{v_i \in V: m \leq i < n\}$ for some $m < n$, note that all components of $G_0$ are of this form, and the procedure in the construction preserves this property of the graph at each step. Thus, the components of $G$ are either of the desired form or are unbounded. But, by the previous paragraph, every component of $G$ is bounded.

Now assume that $A$ is any infinite totally disconnected set. Arrange $A$ into a sequence $a_0 < a_1 < a_2 < \cdots$ in which the indices of the vertices are strictly increasing.  For each $i$, the component of $a_i$ 
is an interval $\{v_{e(i)}, v_{e(i) + 1}, \ldots, v_{s(i)}\}$. 
The argument in the previous paragraph shows that the component of each vertex $a_i$ in $G$ is the same as the component of $a_i$ in the induced subgraph consisting of the finite set of vertices with index less than that of $a_{i+1}$.  Therefore, the functions $e(i)$ and $s(i)$ can be formed by
$\Delta^0_1$ comprehension using $A$ as a
parameter.  

\textit{Claim:} For all $i$ and $s$, $h_s(e(i)) \leq h_{s(i)}(e(i))$. If not, by the $\Sigma^0_1$ least
element principle, there is a least $i$ such that, for some $s$, $h_s(e(i)) > h_{s(i)}(e(i))$. Choose this least $i$ and then choose a minimal witness $s$. Because $s > s(i)$, $v_s$ is not in the same component of $G$ as $v_{e(i)}$. Thus, by the construction of~$s$, vertex $v_{e(i)}$ requires attention at stage $s+1$, and so the construction causes $v_s$ to be in the same component as $v_{e(i)}$, which is impossible. This proves the claim.

Therefore, using the claim, we see that each $n$ is in the range of $f$ if and only if $n$ is in the range of
$f \upharpoonright \{0, \ldots, s(i)\}$ for the least $i$ such that the index of $a_i$ is greater than $n$. This gives a $\Delta^0_1$ definition of the range of $f$, and so $\mathsf{RCA}_0$ proves that the range exists.
\end{proof}

We obtain two computability-theoretic corollaries from the construction of Theorem~\ref{thm:finitecomponents}.

\begin{corollary}\label{cor:finitecomponents1}
There is a computable graph in which every component is finite such that every infinite totally disconnected set computes $\emptyset'$.
\end{corollary}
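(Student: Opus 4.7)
The plan is to specialize the construction in Theorem~\ref{thm:finitecomponents} to the case where $f$ is a computable injective enumeration of~$\emptyset'$. Such an $f$ exists because $\emptyset'$ is computably enumerable (by a standard trick we may list each new element of~$\emptyset'$ at a fresh stage, ensuring injectivity). With this choice, the auxiliary function $h_s(e) = \max\{n < s : f(n) \leq e\}$ is computable uniformly in $s$ and $e$.

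Next, I would observe that the construction of $G_{s+1}$ from $G_s$ in Theorem~\ref{thm:finitecomponents} is effective relative to~$h$: deciding whether a vertex $v_e$ of $G_s$ requires attention at stage $s+1$ is a bounded computation involving only $e$, $s$, and the values $h_s(e)$ and $h_t(e)$ for $t \leq s$, and searching for the minimum such $e$ is a bounded search over the vertices of~$G_s$. Hence $\langle G_s \rangle$ is a uniformly computable sequence of finite graphs, and $G = \bigcup_s G_s$ is a computable graph.

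The verification in Theorem~\ref{thm:finitecomponents} already establishes, without invoking $\aca$, that every component of $G$ is finite; this property transfers directly to the computable setting. For the key conclusion, that proof also shows that from any infinite totally disconnected set~$A$ one obtains a $\Delta^0_1(A)$ definition of the range of~$f$, via the functions $e(i)$ and $s(i)$ read off from $A$ together with the claim $h_s(e(i)) \leq h_{s(i)}(e(i))$. Translating to computability theory, this says that the range of $f$ is computable from~$A$. Since the range of $f$ is $\emptyset'$, every infinite totally disconnected set in $G$ computes~$\emptyset'$, as required. There is no substantive obstacle here beyond checking that the enumeration of $\emptyset'$ can be arranged to be injective and that the construction is genuinely uniform in~$f$; both are routine.
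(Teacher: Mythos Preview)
Your proposal is correct and follows exactly the approach the paper intends: the paper states this corollary without proof, merely noting that it is obtained ``from the construction of Theorem~\ref{thm:finitecomponents},'' and you have correctly spelled out that this means substituting a computable injective enumeration of~$\emptyset'$ for~$f$ and observing that the construction and verification are uniform enough to yield a computable graph with the desired property.
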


\begin{corollary}\label{cor:finitecomponents2}
There is a computable graph in which every component is finite such that there is no infinite c.e. totally disconnected set of vertices.
\end{corollary}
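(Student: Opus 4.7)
The plan is to derive this corollary from Corollary~\ref{cor:finitecomponents1} using the standard fact that every infinite c.e.\ set has an infinite computable subset. Concretely, I will use the same graph $G$ as in Corollary~\ref{cor:finitecomponents1}: take $f$ to be a computable injection whose range is $\emptyset'$ (such an $f$ exists by the standard enumeration of the halting set), and form $G$ by running the construction of Theorem~\ref{thm:finitecomponents}. Since the construction is uniformly computable in $f$, the graph $G$ is computable, and by the verification in that theorem every component of $G$ is finite.

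Next I would record what the verification in Theorem~\ref{thm:finitecomponents} actually establishes: for every infinite totally disconnected set $A \subseteq V$, the range of $f$ is $\Delta^0_1$ definable from $A$, so $A \geq_T \emptyset'$. This is precisely Corollary~\ref{cor:finitecomponents1}, and it is the substantive ingredient here.

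Now suppose, toward a contradiction, that $W$ is an infinite c.e.\ totally disconnected set of vertices of $G$. Because $W$ is an infinite c.e.\ set, it has an infinite computable subset $A$ (enumerate $W$ and choose elements in strictly increasing order of their enumeration). Since totally disconnectedness is inherited by subsets, $A$ is also an infinite totally disconnected set of vertices. Applying Corollary~\ref{cor:finitecomponents1} to $A$ gives $A \geq_T \emptyset'$, but $A$ is computable, which is a contradiction. Hence no such $W$ exists.

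The argument is essentially a packaging step, and there is no real obstacle beyond invoking the right prior result; the only point that requires a moment of care is observing that subsets of totally disconnected sets remain totally disconnected, which is immediate from the definition.
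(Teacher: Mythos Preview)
Your argument is correct and essentially identical to the paper's: both take the graph from Theorem~\ref{thm:finitecomponents}, pass from a hypothetical infinite c.e.\ totally disconnected set to an infinite computable subset, and then invoke Corollary~\ref{cor:finitecomponents1} for a contradiction. The only difference is that you spell out a few details (computability of $G$, inheritance of total disconnectedness by subsets) that the paper leaves implicit.
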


\begin{proof}
Construct the graph from the proof of Theorem~\ref{thm:finitecomponents}. Suppose there is an infinite c.e totally disconnected set of vertices, $A$. Then $A$ has an infinite computable subset, $B$, which is also totally disconnected. Thus, by Corollary~\ref{cor:finitecomponents1}, $B$ computes $\emptyset'$, which is impossible.
\end{proof}


\section{Weak partitions}\label{sec:part}

In this section, we isolate a purely combinatorial principle that underlies the reversal of Theorem~\ref{thm:main}. The principle uses families of sets in the style of Dzhafarov and Mummert~\cite{dm2013}.  In $\rca$, we define an \emph{enumerated sequence of enumerated sets} to be a function $D \colon \mathbb{N} \to \mathbb{N}\times\mathbb{N}$. We abuse notation by writing $D$ as a sequence of sets $\langle D_i : i \in \mathbb{N}\rangle$, and by writing $j \in D_i$ if there is an $s$ with $D(s) = \langle i,j\rangle$. We write $D_i = D_j$ if $(\forall n)[n \in D_i \Leftrightarrow n \in D_j]$, which in turn is an abbreviation for 
\[
(\forall n) [(\exists a)(D(a) = \langle i,n\rangle ) 
	\Leftrightarrow (\exists b)  (D(b) = \langle j,n\rangle ) ].
\]
We may similarly express $D_i \cap D_j = \emptyset$ as a formula with parameter~$D$.
In general, each component is only an enumerated set.  We say that $D_i$ \emph{exists} if there is a set containing exactly those $j \in \nat$ for which there is a $k \in\nat$ with $D(k) = \langle i,j\rangle$.

\begin{defn}
A \emph{weak partition} of~$\nat$  is an enumerated sequence of enumerated sets  $\langle D_i : i \in \mathbb{N}\rangle$ that satisfies the following conditions:
\begin{enumlist}
\item for every $n \in \nat$ there is an $i \in\nat$ with $n \in D_i$, and
\item for all $i,j \in \nat$, either $D_i = D_j$ or $D_i \cap D_j = \emptyset$. 
\end{enumlist}
A weak partition of~$\nat$ is a \emph{strong partition} if it furthermore satisfies the property that $D_i \cap D_j = \emptyset$ when $i\not = j$.  Each enumerated set $D_i$ a \emph{component} of the weak partition. 
\end{defn}

If $\langle D_i : i \in \mathbb{N}\rangle$ is a strong partition of~$\nat$ then $\rca$ proves that the set~$D_j$ exists for each $j \in \nat$. This is because $\rca$ is able to form a function $f\colon \nat \to \nat$ such that $f(m)$ is the unique $j$ such that there exists an~$s$ with $D(s) = \langle j,m\rangle$; then $D_j = f^{-1}\{j\}$ exists by $\Delta^0_1$ comprehension. The situation is different for weak partitions, however.  Rather than providing a direct reversal for the problem of constructing a set in a weak partition of~$\nat$, we show that this problem is intricately tied to the problem of finding a component of a countable graph.

\begin{theorem}\label{thm:family}
The following are equivalent over $\mathsf{RCA}_0$.
\begin{enumlist}
\item Every countable graph has a connected component.
\item For each weak partition $\langle D_i : i \in \mathbb{N}\rangle$ of~$\nat$,  there is an $i$ such that $D_i$ exists.
\end{enumlist}
\end{theorem}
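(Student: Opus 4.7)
The plan is to prove the two implications separately, moving back and forth between graphs and weak partitions. Throughout, I write $D(s) = \langle i_s, n_s\rangle$ for the stage-$s$ enumeration event.

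For $(2) \Rightarrow (1)$: Given a countable graph $G = (V, E)$, I first use the primitive-recursive order-preserving bijection $\pi \colon \nat \to V$, available in $\rca$ because $V$ is infinite, to reduce to the case $V = \nat$. I then define a weak partition $\langle D_n : n \in \nat\rangle$ of~$\nat$ by enumerating $n$ into $D_n$ at stage~$0$ for each~$n$, and at each subsequent stage~$s$, for every pair $u, v \le s$ witnessed to be connected by a path in~$G$ of length at most~$s$, enumerating $v$ into $D_u$. Both weak-partition conditions are immediate: every $n$ lies in~$D_n$, and whenever $i$ and~$j$ lie in the same connected component of~$G$ we have $D_i = D_j$, otherwise $D_i \cap D_j = \emptyset$. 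Applying (2), some $D_i$ exists; this set is exactly the connected component of~$i$ in the reduced graph, and its preimage under $\pi$ gives a connected component of~$G$ by $\Delta^0_1$ comprehension.

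For $(1) \Rightarrow (2)$: Given a weak partition $\langle D_i : i \in \nat\rangle$, I build a graph $G^*$ with vertex set~$\nat$, conceptually split as the disjoint union of ``event'' vertices~$e_s$ (one per stage) and ``element'' vertices~$u_n$ (one per natural number), coded via a fixed bijection. The edges are: $(e_s, u_{n_s})$ for every~$s$, and $(e_s, e_{s'})$ where $s'$ is the least index greater than~$s$ with $i_{s'} = i_s$, if such $s'$ exists. Both conditions are bounded, so the edge set is $\Delta^0_1$ from~$D$. Applying (1), $G^*$ has a connected component~$C$, which must contain at least one element vertex, since every event vertex has an element neighbour. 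I take $n_0$ least with $u_{n_0} \in C$, $s_0 = \mu s (n_s = n_0)$ (which exists by the weak-partition covering condition), and $i_0 = i_{s_0}$, and claim that $D_{i_0} = \{n : u_n \in C\}$. Since $\{n : u_n \in C\}$ is $\Delta^0_1$ from~$C$, the claim shows $D_{i_0}$ exists.

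The main obstacle is to verify this equality. The $\subseteq$ direction is straightforward: any $n \in D_{i_0}$ is enumerated at some stage~$t$ with $i_t = i_0$, and the event chain for~$i_0$ connects $e_t$ to~$e_{s_0}$, which is adjacent to~$u_{n_0}$, placing $u_n$ in~$C$. For the $\supseteq$ direction, given $u_n \in C$, I fix a path from $u_{n_0}$ to~$u_n$ in~$G^*$ and argue by finite induction along the path with the invariant that each event vertex $e_s$ on the path satisfies $D_{i_s} = D_{i_0}$ and each element vertex $u_m$ satisfies $m \in D_{i_0}$. The only nontrivial induction step is an element-to-event transition: the edge forces $n_s = m$, so $m \in D_{i_s}$; combined with $m \in D_{i_0}$ from the inductive hypothesis, the weak-partition property yields $D_{i_s} = D_{i_0}$. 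This is precisely where the weak-partition hypothesis is used. Because the path is coded as a single finite object, the induction is on a bounded variable and is available in~$\rca$.
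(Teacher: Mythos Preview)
Your $(2)\Rightarrow(1)$ direction matches the paper's argument. For $(1)\Rightarrow(2)$, your graph $G^*$ is a genuinely different and more economical construction than the paper's: the paper uses three vertex types $d_i$, $n_j$, $a_{i,s}$, building for each index~$i$ an explicit spine $d_i, a_{i,0}, a_{i,1},\ldots$ and attaching $n_j$ to $a_{i,s}$ when $D(s)=\langle i,j\rangle$, whereas you have only event and element vertices and chain together events sharing an index. Both verifications ultimately hinge on the same idea, that the weak-partition dichotomy propagates membership in $D_{i_0}$ across shared elements.

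However, your justification of the path induction has a gap. Your invariant includes the clause $D_{i_s}=D_{i_0}$ at event vertices, which is $\Pi^0_2$ (membership in an enumerated $D_i$ is $\Sigma^0_1$), and the claim that ``the induction is on a bounded variable and is available in $\rca$'' is incorrect: bounded $\Pi^0_2$ induction over all bounds yields full $\Pi^0_2$ induction, hence $\mathsf{I}\Sigma^0_2$, which $\rca$ does not prove. The paper confronts the same obstacle and handles it by an explicit path-rewriting procedure in which successive paths are related by a $\Sigma^0_0$ formula, after which $\Sigma^0_1$ induction applies to the sequence of finite objects. Your argument admits a lighter repair: under the weak-partition hypothesis, $D_{i_s}=D_{i_0}$ is \emph{equivalent} to the $\Sigma^0_1$ statement $D_{i_s}\cap D_{i_0}\neq\emptyset$ (since $n_0\in D_{i_0}$ makes $D_{i_0}$ nonempty). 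Replacing the event-vertex clause by this $\Sigma^0_1$ reformulation---or, equivalently, carrying only the $\Sigma^0_1$ clause $m\in D_{i_0}$ at element vertices and handling each block of intervening events by a $\Sigma^0_0$ observation that they share one index---brings the induction within $\rca$.
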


\begin{proof} Working in $\rca$, we first prove that (1) implies (2).  Let $\langle D_i \colon i \in \mathbb{N}\rangle$ be a weak partition of~$\nat$. We build a countable graph $G$ whose vertices are the disjoint union of three infinite sets: $\{d_i : i \in\mathbb{N}\}$, $\{n_i \colon i \in \mathbb{N}\}$, and $\{ a_{i,s} : i,s \in \mathbb{N}\}$.  For all $i,j$, there is no edge between $d_i$ and $d_j$ and no edge between $n_i$ and~$n_j$.
For each~$i$, there is an edge between $d_i$ and $a_{i,0}$, and no edge between $d_i$ and $a_{i,s+1}$ for any~$s$. There is an edge between $a_{i,s}$ and $a_{i,s+1}$ for all $i$ and~$s$, and no other edges among vertices of the form~$a_{i,s}$. There is an edge between $a_{i,s}$ and $n_j$ if and only if $D(s) = \langle i,j\rangle$. A diagram of this construction is shown in Figure~\ref{fig:family}.

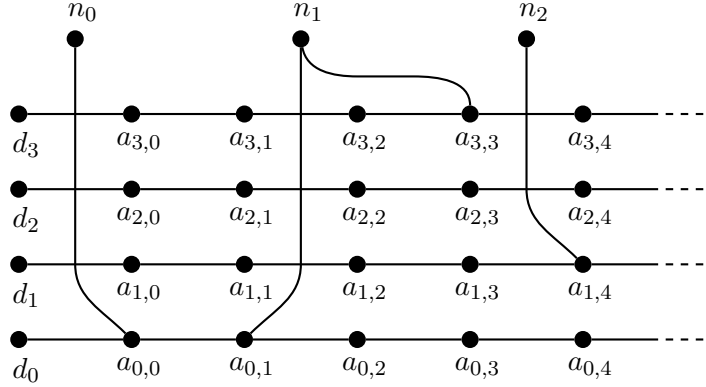
\begin{figure}
\begin{center}

\begin{tikzpicture}[x=1cm,y=1cm,thick]

\tikzstyle{dot}=[draw,circle,inner sep=2pt,fill]
\tikzstyle{labela}=[above,shift={(0.1,0.1)}]
\tikzstyle{labelal}=[above left,shift={(0,0.1)}]
\tikzstyle{labelar}=[above right,shift={(0,0.1)}]

\tikzstyle{labelbl}=[below left,shift={(0,-0.1)}]
\tikzstyle{labelbr}=[below right,shift={(0,-0.1)}]
\tikzstyle{labelb}=[below,shift={(0.1,-0.1)}]

\tikzstyle{labell}=[left,shift={(-0.1,0)}]
\tikzstyle{labelr}=[right]

\newcommand{\biggap}{1}
\newcommand{\smallgap}{1}

\foreach \y in {0,1,2,3} 
{ 
	\foreach \name/\x in { $d_\y$/0, $a_{\y,0}$/1, $a_{\y,1}$/2, $a_{\y,2}$/3, $a_{\y,3}$/4, $a_{\y,4}$/5}
 	{
   		\node[dot] (A\y-\x) at (1.5*\x,\biggap*\y) {};
    		\node[labelb] at  (A\y-\x) {\name};
  	}
  	\foreach \from/\to in {0/1, 1/2,2/3,3/4,4/5}  \draw (A\y-\from) -- (A\y-\to);
}

  \foreach \name/\y in { $n_0$/0, $n_1$/1, $n_{2}$/2}
  {
    \node[dot] (N-\y) at (0.75+3*\y,4*\biggap) {};
      \node[labela] at (N-\y) {\name};
    }
    
  \foreach \y in {0,1,2,3} {
     \draw (A\y-5)--(8.5,\biggap*\y);
     \draw (8.6,\biggap*\y)--(8.7,\biggap*\y);
     \draw (8.8,\biggap*\y)--(8.9,\biggap*\y);
     \draw (9,\biggap*\y)--(9.1,\biggap*\y);
  }


\draw (N-0) to (0.75, 1*\biggap) to[out=-90,in=135] (A0-1);
\draw (N-1) to  (3.75, 1*\biggap) to[out=-90,in=45] (A0-2);

\draw (N-1) to[out=-80,in=180] (4.5,3.5*\biggap)-- (5.25, 3.5*\biggap) to[out=0,in=90] (A3-4);
\draw (N-2)--(6.75,2*\biggap) to[out=-90,in=135] (A1-5);

\end{tikzpicture}
\end{center}
\caption{Illustration of a typical piece of the construction in Theorem~\ref{thm:family}. 
The edges beginning at vertices
of the form $n_i$ indicate that  $0 \in D_0$, $1 \in D_0$, $1 \in D_3$, and $2 \in D_1$.
Because $1 \in D_0 \cap D_3$, it must be that $D_0 = D_3$, so the construction will eventually
add an edge from $n_0$ to some vertex of the form $a_{3,i}$.}\label{fig:family}
\end{figure}

Assuming~ (1), we may choose a connected component $C$ of the graph~$G$. We first claim there is a vertex of the form $d_k$ in $C$.  Let $x$ be any vertex in~$C$. If $x$ is of the form $a_{i,s}$ then $d_i$ is in~$C$. If $x$ is of the form $n_j$ then, by assumption, there is a
$D_i$ such that $j \in D_i$. Let $s$ be such that $D(s) = \langle i,j\rangle$. Then there will be an edge in $G$ from $n_j$ to $a_{i,s}$, and thus $a_{i,s}$ and $d_i$ are in~$C$. This proves the claim. 

Now choose $d_p \in C$ and let $X = \{ j : n_j \in C\}$. We claim that $X = D_p$, which is sufficient to finish the claim that (1) implies (2).  As in the previous paragraph, if $j \in D_p$ then $n_j \in C$. For the converse, assume that $n_j \in C$. Then there is a path $\tau_0$ from $d_p$ to $n_j$ in $G$.   Let $\langle n_{i_0}, n_{i_1}, \dots, n_{i_r} = n_j\rangle$ list the vertices of the  form $n_i$ that are visited by $\tau_0$, in the order they are visited; we will call this sequence the \textit{pattern} of $\tau_0$. 
 If $r= 0$ then it is immediate that $j \in D_p$, because the path must be of the form
\[
d_p \to a_{p,0} \to a_{p,1} \to \cdots  \to a_{p,s} \to n_j
\]
for some~$s$, and then $D(s) = \langle p,j\rangle$. 
Otherwise, if $r > 0$, the matter is more complicated, because we need to 
verify that a path ``directly'' from $d_p$ to $n_j$ can be constructed within $\mathsf{RCA}_0$. We will produce a sequence
$\tau_1, \ldots, \tau_r$ of paths from $d_p$ to $n_j$ such that
the pattern of $\tau_m$ is $\langle n_{i_m}, n_{i_{m+1}},\ldots, n_{i_r}\rangle$. Then the previous argument will apply to $\tau_r$, completing the argument. 

Suppose a path $\sigma$ is given which begins at a vertex $d_p$ and ends at a vertex $n_j$ with pattern $\langle n_{i_0}, n_{i_1}, \dots, n_{i_r} \rangle$. If
$n_{i_0} = n_{i_1}$, then the vertices visited between the first and second visits to $n_{i_0}$ must all be of the form $a_{k,s}$ for some fixed~$k$. 
We can remove $n_{i_0}$ and these vertices from the path, and the result will be a shorter path with the same initial and final vertices as~$\sigma$, 
but with pattern $\langle  n_{i_1}, \dots, n_{i_r} \rangle$.    If $n_{i_0} \not = n_{i_1}$, we look at the vertices visited just before and after $n_{i_0}$; the path will be of the form 
\[
d_p \to \cdots \to a_{p,s_0} \to n_{i_0} \to a_{z,s_1} \to \cdots \to n_{i_1} \to \cdots \to n_{i_r}.
\]
Moreover, all the vertices between $n_{i_0}$ and $n_{i_1}$ will be of the form $a_{z,s}$ or $d_z$. If $z = p$ then we can again remove $n_{i_0}$ by making the first part of the new path proceed directly from $d_p$ to $a_{p,s_1}$ to $n_{i_1}$ and letting the rest be as in~$\sigma$.  If $z \not = p$ then we see
that $D(s_0) = \langle p,n_{i_0}\rangle $ and $D(s_1) = \langle z, n_{i_0}\rangle$,
so $n_{i_0} \in D_p \cap D_z$. Then, because $n_{i_1} \in D_z$, by assumption $n_{i_1}$ is also in $D_{p}$. We may choose any $s'$ with $D(s') = \langle p, n_{i_1}\rangle$ and create a path of the form
\[
d_p \to a_{p,0} \to \cdots \to a_{p,s'} \to n_{i_1} \to \cdots \to n_{i_r}
\]
where the part of the path after $n_{i_1}$ is as in~$\sigma$. 

An analysis of the argument of the previous paragraph shows that $\mathsf{RCA}_0$ proves that, given a path $\sigma$ from a vertex $d_p$ to a vertex $n_j$ with pattern $\langle n_{i_0}, n_{i_1}, \ldots, n_{i_r}\rangle$, with $r > 0$,  there is a path $\sigma'$ which 
has the same beginning and ending vertices and has pattern $\langle n_{i_1}, \ldots, n_{i_r}\rangle$. This relationship between $\sigma$ and $\sigma'$ can be defined by a $\Sigma^0_0$ formula $\phi(\sigma, \sigma')$, so $\mathsf{RCA}_0$ can form the set $Z$ consisting of all such pairs $\langle \sigma,\sigma'\rangle$. Then, by applying $\Sigma^0_1$ induction with $Z$ as a parameter,  $\mathsf{RCA}_0$ proves that, given a 
path $\tau_0$ as above, there is a sequence $\langle \tau_0, \ldots, \tau_r \rangle$ with 
$\phi(\tau_i,\tau_{i+1})$ for all~$i<r$. Then $\mathsf{RCA}_0$ proves that $\tau_r$ is a path from $d_p$ to $n_j$ with pattern $\langle n_j\rangle$, and thus $j \in D_p$, as desired. 

Now, still working in $\rca$, we prove that (2) implies (1). Let $G$ be a countable graph. For each vertex $v_i$, we enumerate a sequence $\langle u_{i,j} : j \in \mathbb{N}\rangle$ by searching (with dovetailing) all possible paths beginning at $v_i$, and  putting $u_{i,j} = w$ if $w$ is the
$j$th vertex that is discovered to be path connected to $v_i$ in this way.
In particular, we may assume that $u_{i,0}= v_i$ for each~$i$.
This construction is uniform in $i$, and so there is an enumerated sequence of enumerated sets $D$ such that $D_i = \{u_{i,j} : j \in \mathbb{N}\}$ for each $i$ (of course $D_i$ may not exist as a set). 

We need to verify that $D$ is a weak partition of~$\nat$.  This follows from the facts that $v_i \in D_i$ for each~$i$ and that $\mathsf{RCA}_0$ can concatenate paths in the graph. Thus, if two vertices $v$ and $w$ are both path connected to a vertex $u$, and $v$ is also path connected to a vertex $u'$, then $w$ is also path connected to~$u'$. 

Now we may apply~(2) to obtain a set $X$ and index $i$ with $X = D_i$. It is immediate that $X$ is the connected component of vertex $v_i$ in~$G$, as desired.
\end{proof}

Applying Theorem~\ref{thm:main}, we obtain a corollary on the reverse mathematics of weak partitions. We also obtain a purely computability theoretic corollary by re-examining the proof of Theorem~\ref{thm:family} in light of Corollary~\ref{cor:main}.

\begin{corollary} 
The principle that if 
$\langle D_i : i \in \mathbb{N}\rangle$ is a weak partition of~$\nat$ then $D_i$ exists for some $i \in \nat$ is equivalent to 
$\mathsf{ACA}_0$ over~$\mathsf{RCA}_0$.
\end{corollary}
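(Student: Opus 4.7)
The proof is immediate by chaining the two equivalences that have already been established. The plan is to invoke Theorem~\ref{thm:family} to transfer the weak partition principle to the principle of existence of a connected component of a countable graph, and then invoke Theorem~\ref{thm:main} to identify that graph-theoretic principle with $\mathsf{ACA}_0$.

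More precisely, I would first observe that Theorem~\ref{thm:family} shows, provably in $\mathsf{RCA}_0$, that the statement
\emph{``for every weak partition $\langle D_i : i \in \nat\rangle$ of $\nat$ there exists $i$ such that $D_i$ exists as a set''}
is equivalent to the statement
\emph{``every countable graph has a connected component.''}
Then, by Theorem~\ref{thm:main}, the latter statement is itself equivalent to $\mathsf{ACA}_0$ over $\mathsf{RCA}_0$. By transitivity of equivalence over the base theory $\mathsf{RCA}_0$, the weak partition principle is therefore equivalent to $\mathsf{ACA}_0$ over $\mathsf{RCA}_0$, as desired.

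There is no real obstacle here, since all the substantive content is absorbed into the preceding theorems. The only thing worth noting is that both cited equivalences are genuinely proved over $\mathsf{RCA}_0$, so chaining them preserves the base theory and gives the corollary in the claimed form. No additional construction or argument is needed.
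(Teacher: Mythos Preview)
Your proposal is correct and matches the paper's approach exactly: the paper simply states the corollary as an immediate consequence of combining Theorem~\ref{thm:family} with Theorem~\ref{thm:main}, with no additional argument given.
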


\begin{corollary}
There is a computable weak partition $\langle D_i : i \in \mathbb{N}\rangle$ of $\nat$  \textup{(}that is, the function $D\colon \nat \to \nat \times \nat$ is computable\textup{)} such that for each $i \in\nat$ the set $D_i$ computes \textup{(}and is thus Turing equivalent to\textup{)}~$\emptyset'$.
\end{corollary}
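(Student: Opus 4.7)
The plan is to combine the construction from the (2) implies (1) direction of Theorem~\ref{thm:family} with the computable graph~$G$ produced in Corollary~\ref{cor:main}. First, I will fix such a computable graph~$G$, whose vertex set can be identified with~$\nat$ via a computable bijection and each of whose connected components is Turing equivalent to~$\emptyset'$. Then, for each vertex~$v_i$, I will dovetail a search through all finite paths in~$G$ beginning at~$v_i$ and enumerate each vertex discovered to be path connected to~$v_i$. Since $G$ is computable, this procedure is uniformly computable in~$i$, so it yields a single computable function $D\colon \nat \to \nat\times \nat$ whose associated enumerated sets $\langle D_i : i \in \nat\rangle$ satisfy $D_i = \{w : w \text{ is path connected to } v_i \text{ in } G\}$; equivalently, $D_i$ is the connected component of~$v_i$ in~$G$.

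The next step is to observe that $\langle D_i : i \in \nat\rangle$ is a weak partition of~$\nat$, which I will do by pointing to the verification already carried out in the proof of Theorem~\ref{thm:family}: each $n \in \nat$ lies in~$D_n$ because $v_n$ is trivially path connected to itself, and for any $i,j$ the sets $D_i$ and $D_j$ either coincide (when $v_i$ and $v_j$ lie in the same component) or are disjoint. No new combinatorial work is required.

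Finally, I will verify that $D_i$ is Turing equivalent to~$\emptyset'$ for each~$i$. The direction $D_i \geq_T \emptyset'$ is immediate from Corollary~\ref{cor:main}, since $D_i$ is a connected component of~$G$. For the reverse direction, $D_i$ is the range of a uniformly computable enumeration, hence c.e., and therefore $D_i \leq_T \emptyset'$. I do not anticipate any serious obstacle in this argument: the entire proof amounts to a computability-theoretic rereading of Theorem~\ref{thm:family} applied to the specific graph furnished by Corollary~\ref{cor:main}, and the only point that really needs attention is the observation that the path-enumeration procedure described in Theorem~\ref{thm:family} is uniformly effective from a computable edge relation.
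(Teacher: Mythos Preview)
Your proposal is correct and matches the paper's intended argument exactly: the paper states the corollary follows ``by re-examining the proof of Theorem~\ref{thm:family} in light of Corollary~\ref{cor:main},'' which is precisely what you do by applying the (2)$\Rightarrow$(1) path-enumeration construction to the computable graph of Corollary~\ref{cor:main}. Your added observation that each $D_i$ is c.e.\ (hence $\leq_T \emptyset'$) makes explicit the parenthetical ``and is thus Turing equivalent to'' in the statement.
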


\section{Parallelization and relationships with $\mathsf{LPO}$}

In the previous sections, we have explored the problem of finding a component of a graph using
the methods of reverse mathematics. In this section, we study the problem from the viewpoint of Weihrauch reducibility, as described by Brattka and Gherardi~\cites{bg2011, bg2011b} and by Dorais, Dzhafarov, Hirst, Mileti, and Shafer~\cite{ddhms2013}.  
From this viewpoint, each set $\wprob{C} \subseteq 2^\omega \times 2^\omega$ can be viewed a \emph{Weihrauch problem} in the following way.
Each $A \in 2^\omega$ is viewed as an \emph{instance} of the problem.
Given an instance $A$, a corresponding \emph{solution} is a $B \in 2^\omega$ such that $(A,B) \in \wprob{C}$. The problems studied in this paper are given by $\Pi^1_2$ sets, and therefore every instance has at least one solution. 

Following Definition~1.5(2) of Dorais \textit{et al.}, we say that a Weihrauch problem~$\wprob{Q} \subseteq 2^\omega \times 2^\omega$ is \emph{strongly Weihrauch reducible} to a Weihrauch problem $\wprob{R} \subseteq 2^\omega \times 2^\omega$  if there are Turing reductions $\Phi$ and $\Psi$ such that whenever $A$ is an instance of $\wprob{Q}$, $B = \Phi(A)$ is an instance of $\wprob{R}$ such that whenever $T$ is a solution to $B$, $S = \Psi(T)$ is a solution to~$A$. This relation is denoted $\wprob{Q}\leq_{\sW} \wprob{R}$. Two problems are \textit{strongly Weihrauch equivalent} if each is strongly Weihrauch reducible to the other.

We have claimed that the proof of Theorem~\ref{thm:family} provides a tight link between the problem of computing a connected component of a countable graph and the problem of computing a component of a weak partition of~$\nat$.  This link cannot be established directly through methods of reverse mathematics, however, because the relation of proof theoretic equivalence in reverse mathematics is too coarse. Strong Weihrauch equivalence can be used to establish a tighter link between principles that emphasizes the uniformity of the relationship between the principles. For example, the following corollary follows from an analysis of the proof of Theorem~\ref{thm:family}.

\begin{corollary} The problem of finding a connected component of a countable graph is strongly Weihrauch equivalent to the problem of finding a component of a weak partition of~$\nat$.
\end{corollary}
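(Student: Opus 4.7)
The plan is to extract strong Weihrauch reductions directly from the uniform constructions in both halves of the proof of Theorem~\ref{thm:family}, verifying in each case that both the forward map on instances and the backward map on solutions are Turing functionals.

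For the reduction of the graph component problem to the weak partition component problem, the forward map $\Phi$ sends a countable graph $G$ to the enumerated sequence of enumerated sets $D$ constructed in the second half of the proof of Theorem~\ref{thm:family}, where each $D_i$ is enumerated by dovetail-searching paths from $v_i$ in $G$. This enumeration is uniformly computable from (the characteristic function of the edge set of)~$G$, so $\Phi$ is a Turing reduction. Each set $D_i$ coincides, as a subset of $\nat$, with the connected component of $v_i$ in~$G$. Therefore, if the oracle returns a set $X$ with $X = D_i$ for some~$i$, then $X$ is already a connected component of~$G$, and the solution map $\Psi$ is essentially the identity functional.

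For the reverse reduction, $\Phi$ sends a weak partition $D$ to the graph $G$ built in the first half of the proof of Theorem~\ref{thm:family}, whose vertices are a disjoint union of the labeled sets $\{d_i\}$, $\{n_j\}$, and $\{a_{i,s}\}$ (encoded by a fixed computable bijection with~$\nat$), and whose edges are uniformly computable from the enumeration of~$D$. Given a connected component $C$ of $G$, define $\Psi(C) = \{\, j : n_j \in C\,\}$, which is computable from the characteristic function of~$C$ alone because the codes of the vertices $n_j$ are fixed. The argument from the proof of Theorem~\ref{thm:family} shows that some $p$ satisfies $d_p \in C$ and that $\{\, j : n_j \in C\,\} = D_p$, so $\Psi(C)$ is indeed a component of the weak partition.

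The main point to verify, though mild, is that $\Psi$ in the reverse direction genuinely requires only the oracle~$C$, and not $D$ or the index~$p$. This holds because the output is merely a sifting of $C$ by a fixed recursively determined set of vertex indices; the path-splicing argument from Theorem~\ref{thm:family} is needed to prove that this sifted set coincides with some $D_p$, but not to produce it. The remainder is a re-reading of the two constructions with attention to the uniformity of each step.
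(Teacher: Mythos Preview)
Your proposal is correct and is exactly the analysis the paper has in mind: the corollary in the paper is stated without proof beyond the remark that it ``follows from an analysis of the proof of Theorem~\ref{thm:family},'' and you have carried out precisely that analysis, checking in particular that the backward functional in the weak-partition-to-graph direction depends only on the component~$C$ and not on the instance~$D$, which is what makes the reduction \emph{strong}.
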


In this section, we study two particular Weihrauch problems.
\begin{itemize}
\item $\wprob{P}$: the Weihrauch problem of finding a connected component of a countable graph. 
\item $\wprob{D}$: the Weihrauch problem of decomposing a countable graph into connected components.  As in Section~3, a decomposition is
a map from vertices to $\nat$ such that $f(v_0) = f(v_1)$ if and only if $v_0$ and $v_1$ are in the same connected component. 
\end{itemize}
We show that $\wprob{P}$ and $\wprob{D}$ are equivalent to their ``parallelized'' forms, and we obtain results that locate these principles relative to better-known Weihrauch degrees (Theorems~\ref{wr3} and~\ref{wr4}).  Many results in this section capitalize on the uniformity of the constructions in the previous sections.   The authors thank one of the referees for suggesting the results of Theorems~\ref{wr4} and~\ref{wr5} and the inclusion of $\mathsf{LPO}$ in Theorem~\ref{wr3}.

We begin by considering the \emph{binary parallelization} $\langle \wprob{P},\wprob{P}\rangle$.  An instance of this problem consists of a pair of countable graphs $(G_1,G_2)$; a solution is a pair of sets $(C_1,C_2)$ such that $C_1$ is a connected component of $G_1$ and $C_2$ is a connected component of $G_2$.  In the terminology more common in the literature on Weihrauch reducibility, the binary parallelization is simply the product of the problem $\mathsf{P}$ with itself. Additional information on parallelization and the product operation is given by Brattka and~Gherardi~\cites{bg2011,bg2011b},
by Dorais~\textit{et al.}~\cite[Section~2]{ddhms2013}, and by Pauly~\cite{pauly2010}.

\begin{theorem}\label{wr1} $\wprob{P}$ is strongly Weihrauch equivalent to its binary parallelization.
\end{theorem}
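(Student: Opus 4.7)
My plan is to verify both directions of the strong Weihrauch equivalence. The direction $\wprob{P} \leq_{\sW} \langle \wprob{P},\wprob{P}\rangle$ is immediate: the forward reduction sends an instance $G$ to the pair $(G,G)$, and the backward reduction sends any solution $(C_1,C_2)$ to~$C_1$.

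For the nontrivial direction $\langle \wprob{P},\wprob{P}\rangle \leq_{\sW} \wprob{P}$, I will use the Cartesian (``box'') product of graphs. Given an instance $(G_1, G_2)$ with $G_i = (V_i, E_i)$, the forward reduction $\Phi$ outputs the graph $G = (V,E)$ whose vertices are coded by a fixed computable pairing as $V = \{\langle v_1, v_2\rangle : v_1 \in V_1,\ v_2 \in V_2\}$, with $\{\langle v_1, v_2\rangle, \langle v_1', v_2'\rangle\} \in E$ exactly when either $v_1 = v_1'$ and $\{v_2, v_2'\} \in E_2$, or $v_2 = v_2'$ and $\{v_1, v_1'\} \in E_1$. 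This construction is clearly uniform in~$(G_1,G_2)$.

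The key lemma is that for any $\langle a_1, a_2\rangle \in V$, the connected component of $\langle a_1, a_2\rangle$ in $G$ equals $C_1^{\ast} \times C_2^{\ast}$, where $C_i^{\ast}$ is the connected component of $a_i$ in $G_i$. The forward inclusion follows by concatenation: given a path $a_1 = w_0, w_1, \ldots, w_k = v_1$ in $G_1$ and a path $a_2 = z_0, z_1, \ldots, z_\ell = v_2$ in $G_2$, the sequence $\langle w_0, z_0\rangle, \langle w_1, z_0\rangle, \ldots, \langle w_k, z_0\rangle, \langle w_k, z_1\rangle, \ldots, \langle w_k, z_\ell\rangle$ is a path in $G$ from $\langle a_1, a_2\rangle$ to $\langle v_1, v_2\rangle$. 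The reverse inclusion follows by projecting any path in $G$ onto each coordinate to obtain walks in the corresponding factor. Both directions are routine in~$\rca$.

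Given the key lemma, the backward reduction $\Psi$ takes a connected component $C$ of $G$ (presented as a characteristic function), searches $C$ for any vertex $\langle u_1, u_2\rangle \in C$, and returns the pair of characteristic functions defined by $\chi_{C_1}(n) = \chi_C(\langle n, u_2\rangle)$ and $\chi_{C_2}(n) = \chi_C(\langle u_1, n\rangle)$. Applying the key lemma at $\langle u_1, u_2\rangle$ gives $C = C_1^{\ast} \times C_2^{\ast}$ with $u_i \in C_i^{\ast}$, so the retrieved sets $C_1, C_2$ coincide with the connected components of $u_1$ in $G_1$ and $u_2$ in $G_2$ respectively, and $(C_1, C_2)$ is a valid solution for the parallelized instance. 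The main work of the proof lies in the key lemma; no serious obstacle arises, as everything reduces to routine bookkeeping with the product graph.
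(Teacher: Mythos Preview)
Your proposal is correct and follows essentially the same approach as the paper: both use the Cartesian (box) product $G_1 \times G_2$ and extract the factor components by fixing one coordinate from a chosen vertex of the given component. Your key lemma (that the component of $\langle a_1,a_2\rangle$ is exactly $C_1^\ast \times C_2^\ast$) is the clean statement of what the paper proves in a slightly more ad hoc form, and your backward reduction $\Psi$ matches the paper's, using only the solution $C$ as required for strong Weihrauch reducibility.
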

\begin{proof}
It is immediate that $\wprob{P}$, like every other problem, is Weihrauch reducible to its parallelization $\langle \wprob{P},\wprob{P}\rangle$. We may effectively convert any computable graph $G$ into an instance of $\langle \wprob{P},\wprob{P}\rangle$ by setting $G_1 = G_2 = G$. Then, given a solution $(C_1,C_2)$, we simply take the first element of the pair,~$C_1$.

The proof of the converse is  conceptually similar to the 
proof that the computability problem corresponding to weak K\"onig's lemma is Weihrauch equivalent to its binary parallelization.
Suppose that $G_1 = (V_1, E_1)$ and $G_2 = (V_2,E_2)$ are countable graphs. 
Form a new graph $G_1\times G_2$ in the following way.  The vertices of $G_1 \times G_2$ consist of all pairs $(u,v)$ where $u \in V_1$ and $v \in V_2$. There is an edge between $(u,v)$ and $(x,y)$ if and only if either $u = x$ and $(v,y) \in E_2$, or $v = y$ and $(u,x) \in E_1$. Note that if $(u,x) \in E_1$ and $(v,y) \in E_2$ then there is a path from $(u,v)$ to $(x,y)$ in $G_1 \times G_2$. 

Suppose that $C$ is a connected component of $G_1 \times G_2$ and $(u,v)$ is a vertex in $C$.  We claim that a vertex $(u,y)$ is in $C$ if and only if $v$ and $y$ are in the same component of $G_2$. For the forward direction, given a path from $(u,v)$ to $(u,y)$ in $G_1 \times G_2$, we can ignore the first components of the vertices, remove any duplicates, and get a path from $v$ to $y$ in $G_2$. Conversely, given a path from $v$ to $y$ in $G_2$, we can make an ordered pair with $u$ and each vertex of the path to form a path from $(u,v)$ to $(u,y)$ in~$G_1\times G_2$.   This proves the claim.  Thus we may compute a connected component of $G_2$ from any connected component of $G_1\times G_2$. The proof that we may compute a component of $G_1$ is similar.
\end{proof}

According to Definition 2.2 of Dorais \textit{et al.}~\cite{ddhms2013}, a $\Pi^1_2$ problem $\wprob{R}$ has {\emph{finite tolerance}} if there is a Turing functional 
$\Theta$ such that whenever $B_1$ and $B_2$ are instances of $\wprob{R}$ with $B_1(n) = B_2 (n)$ for all $n \ge m$, and $S$ is a 
solution of $B_1$, then $\Theta (S,m)$ is a solution to $B_2$.  If the graph component problem $\wprob{P}$ had finite tolerance, we could use our Theorem~\ref{wr1} above and
Theorem~2.5 of Dorais \textit{et al.}~\cite{ddhms2013} to show that the infinite parallelization $\widehat{\wprob{P}}$ (defined below) is strongly Weihrauch reducible to~$\wprob{P}$.
The next theorem rules out this proof strategy.

\begin{theorem}\label{wr2}
The problem of finding a connected component of a countable graph does not have finite tolerance.
\end{theorem}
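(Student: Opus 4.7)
The plan is to diagonalize against any Turing functional $\Theta$ that putatively witnesses finite tolerance for $\wprob{P}$. I will exhibit a computable instance $B_1$, a computable solution $S$ to $B_1$, a computable perturbation $B_2$ of $B_1$ above some integer $m$, and show that every connected component of $B_2$ is non-computable. Since $\Theta$ applied to the computable oracle data $S$ and $B_2$ with integer input $m$ must produce a computable output, that output cannot be a component of $B_2$, and $\Theta$ fails.

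The construction rests directly on Corollary~\ref{cor:main}, which furnishes a computable graph $G$ each of whose connected components computes~$\emptyset'$. After relabeling, I may take the vertex set of $G$ to be $\{1, 2, 3, \ldots\}$. Let $B_1$ be $G$ regarded as a graph on $\mathbb{N}$, so that vertex $0$ is isolated, and let $S = \{0\}$, which is a component of $B_1$. Form $B_2$ by adding the single edge $(0,1)$ to $B_1$, and let $m$ be strictly larger than the code of this edge under the chosen pairing. Then $B_1(n) = B_2(n)$ for all $n \geq m$, so all the hypotheses of finite tolerance are in place.

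What remains is a short verification that every component of $B_2$ computes~$\emptyset'$. The only component of $B_2$ that differs from a component of $B_1$ is the one containing $0$: it equals $\{0\} \cup C$, where $C$ is the $G$-component of vertex~$1$. This set is Turing-equivalent to $C$, which computes~$\emptyset'$ by Corollary~\ref{cor:main}. Every other component of $B_2$ is already a component of $G$ and so computes~$\emptyset'$ as well. Since $\Theta$'s output on computable inputs is computable, it cannot be any component of $B_2$, the desired contradiction. The only delicate point is confirming that \emph{every} component of $B_2$, not merely the merged one, inherits the $\emptyset'$-hardness from $G$; this is automatic because the modification merges only the trivial component $\{0\}$ into an existing component of $G$, leaving all other components of $G$ intact.
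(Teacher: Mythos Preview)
Your argument is correct, but it takes a genuinely different route from the paper's. The paper gives a very elementary counterexample: $B_1$ is the disjoint union of the complete graph on the even vertices and the complete graph on the odd vertices, $B_2$ is $B_1$ with the single edge $(v_0,v_1)$ added, and $S$ is the set of even vertices. Since $B_1$ and $B_2$ agree from index $1$ onward, $\Theta(S,1)$ would have to be a component of both $B_1$ and $B_2$; but $B_2$ is connected while $B_1$ has two components, so they share no component at all. No computability-theoretic hardness is invoked.

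Your approach instead leans on Corollary~\ref{cor:main}: you arrange $B_2$ so that \emph{every} component computes $\emptyset'$, and then observe that $\Theta$ on computable oracle data can only output a computable set. This is a perfectly valid diagonalization, and it has the pleasant feature of being robust to the precise formulation of finite tolerance (whether $\Theta$ sees only $S$ or also the instance $B_2$, everything in sight is computable). The trade-off is that your proof imports the full construction behind Theorem~\ref{thm:main}, whereas the paper's proof is self-contained and shows that finite tolerance already fails for graphs with at most two components. One small wording issue: you describe $B_2$ as a ``perturbation of $B_1$ above $m$,'' but the perturbation is \emph{below} $m$; the graphs agree above~$m$. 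Your formal statement $B_1(n)=B_2(n)$ for $n\ge m$ is correct, so this is only a slip of phrasing.
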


\begin{proof}
We construct a computable graph $G$ with vertex set $V = \{ v_i : i \in \nat\}$. Such a graph may be presented as the 
characteristic function of its set of edges, using an indexing $\{e_i : i \in \nat\}$ of the edges of the complete graph on~$V$. Without loss of generality, we may assume that
the first potential edge $e_0$ would connect $v_0$ to $v_1$.  Let $B_1$ be the graph consisting of a complete graph the on vertices $\{ v_{2i} : i \in \nat\}$ 
and a complete graph on the vertices $\{v_{2i+1} : i \in \nat\}$.  Let $B_2$ be $B_1$ with the edge $e_0$ added.  Let $S$ be the solution to $B_1$ consisting
of the even vertices.  Suppose by way of contradiction that $\Theta$ is a Turing functional witnessing that $P$ has finite tolerance.
Because $B_1(n)=B_1(n)$ for all $n\ge 1$, $\Theta(S, 1)$ is a solution of $B_1$, that is, a connected component of $B_1$.
Because $B_1(n)=B_2(n)$ for all $n\ge 1$, $\Theta(S, 1)$ is also a connected component of $B_2$,
contradicting the fact that $B_1$ and $B_2$ have no common connected components.
\end{proof}

Despite the inapplicability of Theorem~2.5 of Dorais \textit{et al.}~\cite{ddhms2013}, we can directly prove that the \emph{infinite parallelization} $\widehat{\wprob{P}}$ is strongly Weihrauch equivalent to $\wprob{P}$. An instance of the parallelization $\widehat{\wprob{P}}$ consists of an infinite sequence $\langle G_i \rangle_{i\in\nat}$ of countable graphs; a solution is a sequence $\langle C_i\rangle_{i\in\nat}$ of sets such that $C_i$ is a component of $G_i$ for each~$i \in \mathbb{N}$.  An infinite parallelization can also be seen as an infinite product of the problem $\wprob{P}$ with itself.
%

Our approach will capitalize on established results from the theory of Weihrauch degrees.  The following
two facts are included in Proposition~4.2 of Brattka and~Gherardi~\cite{bg2011b}.  First, Weihrauch reducibility
is preserved by parallelization, so $\wprob{Q} \leq_{\sW} \wprob{R}$ implies $\widehat{\wprob{Q}}\leq_{\sW} \widehat {\wprob{R}}$.  Second,
parallelization is idempotent, so $\widehat {\wprob{Q}} \equiv_{\sW} \widehat{\widehat{\wprob{Q}\,}}$.  

We will make use
of the \emph{limited principle of omniscience}, denoted $\wprob{LPO}$, which asserts that if
$p\colon \mathbb{N} \to \mathbb{N}$ then either there exists an $n \in \mathbb{N}$ such that $p(n)=0$, or
$p(n) \neq 0$ for all $n \in \mathbb N$. Although this principle is trivially provable in classical logic, it has nontrivial Weihrauch degree, as the nonuniform classical proof suggests.  The infinite parallelization $\widehat{\mathsf{LPO}}$ is much stronger in reverse mathematics. When stated as a $\Pi^1_2$ principle, $\widehat{\mathsf{LPO}}$ is equivalent to $\mathsf{ACA}_0$ over~$\mathsf{RCA}_0$.

\begin{theorem}\label{wr3}
Each of the following is strongly Weihrauch equivalent to the others:
$\wprob{P}$, $\widehat{\wprob{P}}$, $\wprob{D}$, $\widehat{\wprob{D}}$, and $\widehat{\wprob{LPO}}$.
\end{theorem}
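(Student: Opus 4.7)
The plan is to prove the theorem through the central Weihrauch reduction $\widehat{\wprob{LPO}} \equiv_{\sW} \wprob{P}$, from which every other equivalence in the list follows by trivial reductions and the standard facts about parallelization stated just above the theorem (preservation of $\leq_{\sW}$ under parallelization and idempotence $\widehat{\widehat{\wprob{Q}\,}} \equiv_{\sW} \widehat{\wprob{Q}}$). Throughout, the trivial reductions $\wprob{Q} \leq_{\sW} \widehat{\wprob{Q}}$ and $\wprob{P} \leq_{\sW} \wprob{D}$ (the latter obtained by selecting the component of a fixed vertex from a given decomposition) will be used without comment.

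The first step is to show $\wprob{P} \leq_{\sW} \widehat{\wprob{LPO}}$ and $\wprob{D} \leq_{\sW} \widehat{\wprob{LPO}}$. Given a countable graph $G$ with vertices $v_0, v_1, \ldots$, define for each pair $(i,j)$ the 0/1 sequence $p_{\langle i,j\rangle}$ by putting $p_{\langle i,j\rangle}(n) = 1$ precisely when there is a path of length at most~$n$ from $v_i$ to $v_j$. This sequence is uniformly computable from $G$, and a solution to $\widehat{\wprob{LPO}}$ on the family $\langle p_{\langle i,j\rangle}\rangle$ is the characteristic function of the ``path connected'' equivalence relation on $V$. From this we read off the component of $v_0$, yielding a solution to $\wprob{P}$, and we also read off a decomposition by sending each $v_i$ to the least index $j \leq i$ with $v_i$ path connected to~$v_j$, yielding a solution to $\wprob{D}$.

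The main work is the converse $\widehat{\wprob{LPO}} \leq_{\sW} \wprob{P}$, which is a uniform strengthening of Corollary~\ref{cor:main}. Given an instance $\langle p_i\rangle_{i\in\nat}$ of $\widehat{\wprob{LPO}}$, dovetail a search for values $n$ with $p_i(n) = 1$ and emit each new index~$i$ exactly once; this produces, uniformly, an injection $f\colon \nat \to \nat$ whose range is $\{i : (\exists n)\,p_i(n) = 1\}$. Feed this $f$ into the construction of Theorem~\ref{thm:main} to obtain a graph $G(f)$ computable from~$f$. Given any component $C$ of $G(f)$, compute any vertex $\ver{\sigma}{k} \in C$; the claim established in the proof of Theorem~\ref{thm:main} shows that for every~$j$, we have $j \in \operatorname{range}(f)$ if and only if $\ver{\sigma\cat\seq{j}}{0} \in C$. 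Checking membership of a specified vertex in a given characteristic function is computable, so this reads off the range of $f$, and hence the answer to $\widehat{\wprob{LPO}}$, from~$C$. The main obstacle is verifying that every step in the proof of Theorem~\ref{thm:main} (both the construction of~$G(f)$ from~$f$ and the extraction of $\operatorname{range}(f)$ from~$C$) is in fact uniformly computable rather than merely provable in~$\rca$; this amounts to inspecting the proof with an eye to uniformity.

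Assembling the pieces, preservation of $\leq_{\sW}$ under parallelization and idempotence give $\widehat{\wprob{P}} \leq_{\sW} \widehat{\widehat{\wprob{LPO}}} \equiv_{\sW} \widehat{\wprob{LPO}}$ and similarly $\widehat{\wprob{D}} \leq_{\sW} \widehat{\wprob{LPO}}$. Combined with $\widehat{\wprob{LPO}} \leq_{\sW} \wprob{P} \leq_{\sW} \wprob{D} \leq_{\sW} \widehat{\wprob{D}}$ and the trivial $\wprob{P} \leq_{\sW} \widehat{\wprob{P}}$, all five principles sit in a common strong Weihrauch degree, proving the theorem.
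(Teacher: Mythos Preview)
Your proof is correct and follows essentially the same route as the paper: establish the chain $\widehat{\wprob{LPO}} \leq_{\sW} \wprob{P} \leq_{\sW} \wprob{D} \leq_{\sW} \widehat{\wprob{LPO}}$ using the construction of Theorem~\ref{thm:main} for the hard reduction and path-connectedness queries for the easy one, then invoke idempotence and monotonicity of parallelization. One small technical slip: your injection $f$, obtained by dovetailing and emitting each index $i$ with $(\exists n)\,p_i(n)=1$ exactly once, need not be \emph{total}, since only finitely many (or no) indices may satisfy the existential; the paper avoids this by defining $f((i,j))$ to be $2i+1$ at the first witness and a fresh even value $2(i,j)$ otherwise, so that $f$ is total and injective with $2i+1$ in its range iff the $i$th $\wprob{LPO}$ instance is positive---you should pad your enumeration similarly.
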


\begin{proof}
The proof of the theorem will follow easily once we establish the following strong Weihrauch reductions:
\[
\widehat{\wprob{LPO}} \leq_{\sW} \wprob{P} \leq_{\sW} \wprob{D} \leq_{\sW} \widehat{\wprob{LPO}}.
\]
Let $\langle p_i \rangle_{i \in \mathbb N}$ be a sequence of functions viewed as an instance of $\widehat{\wprob{LPO}}$.
We will conflate ordered pairs with their integer codes and assume the pairing function is bijective.
Define
\[
f((i,j)) = 
\begin{cases}
2i+1 & \text{if}~ p_i (j) = 0 \text{ and }  (\forall t<j) [ p_i (t) \neq 0 ],\\
2(i,j) & \text{otherwise.}
\end{cases}
\]
Then $f$ is an injection, and each odd number $2i+1$ is in the range of $f$ if and only if $(\exists n)[p_i (n) = 0]$.
Use the construction from the proof of Theorem~\ref{thm:main} to build a graph $G$; it can be seen that this construction is uniform.  As  in that theorem, each connected component of $G$ computes (in a uniform manner) the range of $f$, so $\{i : ( \exists n)[p_i (n) = 0]\} = \{ i : (\exists n)[f(n) = 2i+1]\}$ is a solution to the instance of $\widehat{\wprob{LPO}}$ computable from the connected component.  Thus $\widehat{\wprob{LPO}} \leq_\sW \wprob{P}$.

If $G$ is a graph with vertex $v$, each decomposition of $G$ into connected components computes the connected
component containing~$v$.  Thus $\wprob{P} \leq_{\sW} \wprob{D}$.  It remains to show that
$\wprob{D} \leq_{\sW} \widehat{\wprob{LPO}}$.  Suppose $G$ is a graph with vertices $\{v_i : i \in \mathbb{N} \}$.
For each pair $(i,j)$, let $p_{(i,j)}(n)=0$ if $n$ is a code for a path from $v_i$ to $v_j$ and let $p_{(i,j)}(n) = 1$
otherwise.  Applying $\widehat{\wprob{LPO}}$ yields the set $S = \{ (i,j) : \exists n (p_{(i,j)} (n) = 0)\}$, which is
the set of codes for all pairs of path connected vertices of $G$.
The function $d(v_n) =( \mu\,  i < n) [(i,n) \in S]$ (where the function returns $n$ if the finite search fails)
is a decomposition of $G$ into connected components that is computable from $S$.  Thus
$\wprob{D} \leq_{\sW} \widehat{\wprob{LPO}}$, completing our initial chain of reductions.

By transitivity of Weihrauch reducibility, we may conclude that
$\wprob{P} \equiv_{\sW} \wprob{D} \equiv_{\sW} \widehat{\wprob{LPO}}$.  Because parallelization is idempotent and preserves Weihrauch reducibility, we have 
$
\widehat{\wprob{P}} \equiv_{\sW} \widehat{\wprob{D}} \equiv_{\sW}\widehat{ \widehat{\wprob{LPO}}}\equiv_{\sW} \widehat{\wprob{LPO}},
$
completing the proof of the theorem.
\end{proof}

The techniques of Theorem~\ref{wr3} can also be applied to the properties ${\sf{FC}}\text{-}i$ defined in Section~\ref{section4}.

\begin{theorem}\label{wr4} For $i \in \{ 1, 2, 3\}$, the principles ${\sf{FC}}\text{-}i$ are strongly Weihrauch equivalent to $\widehat{\wprob{LPO}}$, as are their infinite parallelizations.
\end{theorem}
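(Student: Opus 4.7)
The plan is to establish the strong Weihrauch reducibility chain
\[
\widehat{\wprob{LPO}} \leq_\sW \mathsf{FC}\text{-}3 \leq_\sW \mathsf{FC}\text{-}2 \leq_\sW \mathsf{FC}\text{-}1 \leq_\sW \widehat{\wprob{LPO}},
\]
from which the strong Weihrauch equivalences claimed in the theorem follow by transitivity. The parallelized versions will then be obtained from the standard fact, cited in the proof of Theorem~\ref{wr3}, that parallelization preserves Weihrauch reducibility and is idempotent, yielding $\widehat{\mathsf{FC}\text{-}i} \equiv_\sW \widehat{\widehat{\wprob{LPO}}} \equiv_\sW \widehat{\wprob{LPO}}$.

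The middle reductions $\mathsf{FC}\text{-}3 \leq_\sW \mathsf{FC}\text{-}2 \leq_\sW \mathsf{FC}\text{-}1$ can be read off from the proof of Lemma~\ref{lem:fcforward}, whose implications are witnessed by procedures that use only the provided solution: canonical indices yield characteristic functions by a Turing reduction, and a family of characteristic functions of components yields an infinite totally disconnected set by the standard uniform extraction of an increasing subsequence from an enumerated set. For the leftmost reduction $\widehat{\wprob{LPO}} \leq_\sW \mathsf{FC}\text{-}3$, the plan is to follow the template of Theorem~\ref{wr3}: transform an instance $\langle p_i\rangle$ of $\widehat{\wprob{LPO}}$ into an injection $f$ via the formula used there, so that the odd numbers in the range of $f$ code the $\widehat{\wprob{LPO}}$-answer; apply the construction of Theorem~\ref{thm:finitecomponents} to $f$ to produce a graph $G$ in which every component is finite; and extract the range of $f$ from any infinite totally disconnected set of $G$ via the procedure in the Verification of that theorem.

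The principal new content is the reduction $\mathsf{FC}\text{-}1 \leq_\sW \widehat{\wprob{LPO}}$. Given an $\mathsf{FC}\text{-}1$ instance $G = (V,E)$, I would assemble a $\widehat{\wprob{LPO}}$ instance from two families of uniformly computable functions: for each ordered pair $(u,v)$ of vertices, a function $p_{(u,v)}$ whose zeros enumerate codes of finite paths from $u$ to $v$; and for each vertex $v$ and each $n \in \nat$, a function $q_{(v,n)}$ whose zeros enumerate codes of paths from $v$ to some vertex whose index is at least~$n$. From the $\widehat{\wprob{LPO}}$-answer alone, for each $v = 0, 1, 2, \ldots$ in turn, I would locate the least $n_v$ such that $q_{(v,n_v)}$ has no zero---such an $n_v$ exists because every component of $G$ is finite, by the hypothesis of $\mathsf{FC}\text{-}1$---compute the component of $v$ as the finite set $\{u < n_v : \mathsf{LPO}(p_{(u,v)}) = 1\}$, and compute its canonical index $c(v)$. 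The output is the subsequence of the $c(v)$ obtained by omitting repetitions; it is infinite because $V$ is infinite and every component is finite.

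The main obstacle is designing the $\widehat{\wprob{LPO}}$ instance in this last reduction: the path-connectedness queries $p_{(u,v)}$ alone would only yield a decomposition of $G$ (as in the proof of Theorem~\ref{wr3}), not a presentation of each component as an explicit finite set; the $q_{(v,n)}$ family is the essential additional input that allows the bounds on each component, and hence the canonical indices required by $\mathsf{FC}\text{-}1$, to be computed from the $\widehat{\wprob{LPO}}$-answer alone.
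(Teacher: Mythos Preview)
Your proposal is correct and follows the same overall architecture as the paper: the same chain
\[
\widehat{\wprob{LPO}} \leq_\sW \mathsf{FC}\text{-}3 \leq_\sW \mathsf{FC}\text{-}2 \leq_\sW \mathsf{FC}\text{-}1 \leq_\sW \widehat{\wprob{LPO}},
\]
the same appeal to Lemma~\ref{lem:fcforward} for the middle two reductions, the same reuse of the injection from Theorem~\ref{wr3} together with the construction of Theorem~\ref{thm:finitecomponents} for the leftmost reduction, and the same idempotence/monotonicity argument for the parallelizations.

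The one genuine difference is in your proof of $\mathsf{FC}\text{-}1 \leq_\sW \widehat{\wprob{LPO}}$. You feed $\widehat{\wprob{LPO}}$ two families of queries---path-connectedness queries $p_{(u,v)}$ and boundedness queries $q_{(v,n)}$---and then reconstruct each component from its bound and its membership. The paper instead uses a single family $p_i$, one for each potential canonical index~$i$, where $p_i$ has no zero exactly when $i$ codes a finite set that is connected in~$G$ and has no edge leaving it; the output is then simply the enumeration of those~$i$ flagged by the $\widehat{\wprob{LPO}}$-answer. Both approaches are valid strong reductions (neither backward functional needs access to~$G$), but the paper's is somewhat more economical: it asks one $\Pi^0_1$ question per candidate component and reads the answer off directly, whereas your approach assembles each component from lower-level data. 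Your route, on the other hand, makes the decomposition of~$G$ explicitly visible as a by-product, which is a mild conceptual bonus.
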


\begin{proof}
As in the proof of the preceding theorem, we first prove a chain of Weihrauch reductions:
\[
\widehat{\mathsf{LPO}} \leq_{\sW}
{\sf FC}{\text -3}  \leq_{\sW} {\sf FC}{\text -2} \leq_{\sW} {\sf FC}{\text -1}
\leq_{\sW} \widehat{\mathsf{LPO}}.
\]

To prove that $\widehat{\wprob{LPO}}\leq_{\sW} {\sf FC}\text{-3}$, fix an instance of $\widehat{\wprob{LPO}}$
and construct an associated injection $f$ as in the proof of Theorem~\ref{wr3}.  Apply the (uniform) construction from
the proof of Theorem~\ref{thm:finitecomponents} to $f$ to obtain a graph in which every component is finite.
As in that proof, any infinite totally disconnected set computes the range of $f$, and so computes a solution
to the instance of~$\widehat{\wprob{LPO}}$.

The proof of Lemma~\ref{lem:fcforward} shows that
${\sf FC}{\text -3}  \leq_{\sW} {\sf FC}{\text -2} \leq_{\sW} {\sf FC}{\text -1}$.  It remains to prove
that $ {\sf FC}{\text -1}  \leq_{\sW} \widehat{\wprob{LPO}}$.  Let $G$ be an infinite graph in which every
connected component is finite.  Construct an instance of $ \widehat{\wprob{LPO}}$ consisting of a sequence
of functions $p_i\colon \mathbb N \to \mathbb N$,
defined as follows.
Set $p_i (n) = 1$ if  (1) $i$ is a canonical index
for $F$, a finite collection of vertices, (2) $G$ restricted to $F$ is a connected subgraph, and (3) if $v_n\notin F$,
then there is no edge between $v_n$ and any vertex listed by~$F$.  Otherwise, set $p_i (n) = 0$.
Note that $i$ is the canonical index of a connected
component of $G$ if and only if $p_i(n) \neq 0$ for all $n$.  Consequently, any solution of the instance of
$ \widehat{\wprob{LPO}}$ computes the sequence of codes for connected components satisfying~${\sf FC}{\text -1} $.

Because parallelization is idempotent and preserves Weihrauch reducibility, the parallelizations of $ {\sf FC}{\text -1}$,
$ {\sf FC}{\text -2}$, and $ {\sf FC}{\text -3}$ are also Weihrauch equivalent to $ \widehat{\wprob{LPO}}$.
\end{proof}

The Weihrauch degree of $\widehat{\wprob{LPO}}$ is the same as that of $\wprob{lim}$, which has been widely
studied.  For example, it is Weihrauch complete for effective $\Sigma^0_2$ measurable functions
\cite{b2005}, and equivalent to computing
the Radon-Nikodym derivative \cite{h2012},
constructing a measure with a given closed set as support \cite{pf2014},
and computing a subgame perfect equilibrium of an infinite sequential game
with open/closed payoffs \cite{lp2014}.

Weihrauch reducibility can also be used to analyze statements about connected components and decompositions of graphs with finitely many connected components.  We let ${\sf P}_k$ be the problem of finding a a connected component of a countable graph that has exactly $k$ connected components,
and let ${\sf D}_k$ be the problem of decomposing a graph with exactly $k$ connected components into its connected components.  These graphs do not come with any witness for the number of components; for example $\mathsf{D}_5$ is simply the restriction of $\mathsf{D}$ to graphs with exactly~$5$ components.

The next theorem relates $\mathsf{D}_k$ and $\mathsf{P}_k$ to $\sf C_{\nat}$, the Weihrauch principle for closed choice on the natural numbers.  The principle $\sf C_{\nat}$ asserts that, given a non-surjective map $p\colon \nat \to \nat$, we can find an $n \in \nat$ which is not in the range of $p$.  The motivation for the name is that the complement of the range of $p$ can be viewed as a closed subset of $\nat$.  For more information about closed choice, see Brattka, de Brecht, and Pauly \cite{bbp2012}.
\newpage

\begin{theorem}\label{wr5}
For each $k \ge 2$, ${\sf P}_k \equiv_\sW {\sf D}_k \equiv_\sW\sf C_\nat$.
\end{theorem}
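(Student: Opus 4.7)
The plan is to prove the theorem via the cycle of strong Weihrauch reductions
\[
\mathsf{P}_k \leq_{\sW} \mathsf{D}_k \leq_{\sW} \mathsf{C}_\nat \leq_{\sW} \mathsf{P}_k.
\]
The first reduction is immediate: given a decomposition~$f$ of a $k$-component graph~$G$ as the solution to~$\mathsf{D}_k$, the component of a fixed vertex~$v_0$ is $\{v \in V : f(v) = f(v_0)\}$, uniformly computable from~$f$, and this gives a solution to~$\mathsf{P}_k$.

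For $\mathsf{D}_k \leq_{\sW} \mathsf{C}_\nat$, I would observe that the set of $k$-tuples of vertex indices $\langle i_1, \ldots, i_k\rangle$ whose corresponding vertices lie in pairwise distinct components of a $k$-component graph~$G$ is the complement of a c.e.\ subset of~$\nat^k$, because path-connectedness is $\Sigma^0_1$ in~$G$, and it is nonempty because $G$ has exactly~$k$ components. Enumerating this c.e.\ complement as the range of a computable function~$q$ (after fixing a bijection $\nat^k \to \nat$) and applying $\mathsf{C}_\nat$ to~$q$ yields a $k$-tuple of representatives from pairwise distinct components. I would then compute the decomposition by running, for each vertex~$v$, parallel searches for a path from~$v$ to each of the $k$ representatives; exactly one such search succeeds and supplies the label of~$v$.

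The main obstacle is the third reduction, $\mathsf{C}_\nat \leq_{\sW} \mathsf{P}_k$. Given a non-surjective $p \colon \nat \to \nat$, the plan is to construct a graph~$G_p$ with exactly~$k$ components, each of which computably yields an element of $A = \nat \setminus \mathrm{range}(p)$. The natural starting point is a ``path-with-cuts'' construction on the vertex set~$\nat$: add the edge $(v_n, v_{n+1})$ at stage~$s$ whenever $p(s) = n$ for the first time. The components are then the maximal intervals of~$\nat$ separated by elements of~$A$, and any component recovers an element of~$A$ as a boundary index. The difficulty is that this graph has $|A|+1$ components, which need not equal~$k$.

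To force exactly $k$ components, I would preprocess~$p$ uniformly into a function~$p'$ whose complement~$A'$ has exactly $k-1$ elements and for which each element of~$A'$ computably yields an element of~$A$. For $k=2$, this is the standard strong Weihrauch equivalence of~$\mathsf{C}_\nat$ with its unique-choice restriction (to inputs with $|A|=1$), implemented by a lazy enumeration of~$\nat$ that skips the current ``first missing'' value $n_s = \min(\nat \setminus \{p(0), \ldots, p(s)\})$ at each stage. For $k > 2$, one additionally pads~$A'$ with $k-2$ reserved values that remain distinguishable inside any component returned by~$\mathsf{P}_k$, so that they can be identified and discarded in order to extract an element of the original~$A$. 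The chief technical challenge will be carrying out this preprocessing uniformly so that the composite reduction is strongly Weihrauch.
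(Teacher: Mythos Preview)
Your cycle of reductions matches the paper's, and your arguments for ${\sf P}_k \le_\sW {\sf D}_k$ and ${\sf D}_k \le_\sW {\sf C}_\nat$ are essentially identical to those in the paper.

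For ${\sf C}_\nat \le_\sW {\sf P}_k$ there is one minor issue and one genuine gap.

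The minor issue: your path-with-cuts graph has edge $(v_n,v_{n+1})$ exactly when $n$ eventually enters the range of the enumeration, which is only a $\Sigma^0_1$ condition in the input. An instance of ${\sf P}_k$ must have a \emph{decidable} edge relation, so as written $\Phi(p)$ is not a valid instance. This is repairable (e.g.\ with auxiliary stage vertices), but it is exactly where the paper's construction does its work. Rather than preprocessing to a single missing value, the paper directly builds a two-row graph: a $u$-spine carrying every edge $(u_i,u_{i+1})$, and a $v$-row that is ``capped'' to the spine at positions $b_0<b_1<\cdots$, where $b_i$ is the least stage by which $i$ has been seen in the range of~$p$. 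The predicate ``$n=b_i$ for some $i$'' is decidable by bounded search, so the edge set is decidable; the resulting graph has exactly two components, and from either one the number of caps up to the first $v$-vertex on the boundary recovers the least element not in the range of~$p$.

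The genuine gap is your $k>2$ step. If you pad $A'$ with $k-2$ fixed reserved values, the path-with-cuts graph will have components whose boundaries consist \emph{only} of reserved values---for instance, with reserved values $0,1,\ldots,k-3$ and real target $m\geq k-2$, the component $\{v_0\}$ has only the reserved boundary~$0$. If ${\sf P}_k$ returns such a component, the output functional $\Psi$ (which in strong Weihrauch reducibility sees only the solution, not the original~$p$) must still output an element of~$A$; but that component is identical for every input with $\min A\geq k-2$, so no fixed $\Psi$ can succeed. ``Identifying and discarding'' the reserved boundary does not help, since there is nothing else in the component to fall back on, and you do not get a second call to~${\sf P}_k$. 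The paper instead takes $k-1$ disjoint copies of its two-component graph and adds edges among the copies' $u_0$-vertices; this fuses all $k-1$ spines into a single component and leaves $k-1$ separate tail components, giving exactly $k$ components each of which still encodes the least missing value by the same cap-counting argument.
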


\begin{proof}
First we will show that $\sf C_\nat \le_\sW P_2$.  Let $p\colon \nat \to \nat$ enumerate the complement of
a nonempty subset of $\nat$.  Construct a related graph $G$ as follows.  Let
$V = \{ u_i : i \in \nat\}\cup\{v_i : i \in \nat\}$ be the vertices of $G$ and include all edges of the
form $(u_i , u_{i+1})$ in $G$.  Consider the finite (possibly empty) increasing sequence $b_i$
defined as follows.  Let $b_0 = (\mu\, t)[ p(t) = 0 ]$ if such a $t$ exists.  Let
$b_{i+1} = (\mu\, t)[ t > b_i \land (\exists s \le t)( p(s) = i+1)]$ if such a $t$ exists.  Note that
there is a computable procedure that determines for each $j \in \nat$ whether $j$ is equal to some~$b_i$.  For each $n$, if $n=b_i$, then add the edge $(u_n , v_n)$ to~$G$;  otherwise,
add $(v_n , v_{n+1} )$ to $G$.  The resulting graph will be similar to the subgraph $G_{\langle 0 \rangle}$
shown in Figure~\ref{fig:finite2}, with the number of caps equal to the least $n$ not in the range of $p$.  In particular, the resulting graph $G$ will have exactly two connected components, one of which will contain all vertices of the form~$u_i$.

Suppose $C$ is a connected component of $G$. 
 If $u_0 \in C$, some (least) $v_n$ must not be in $C$.
Locate it and call it $v_{n_0}$.  If $u_0 \notin C$, some (least) $v_n$ must be in $C$.  Locate it and call
it $v_{n_0}$.  In either case, the number of edges of the form $(u_j, v_j )$ with $j \le n_0$ is the least integer not in the range of $p$.  Thus $\sf C_\nat \le_\sW {\sf P}_2$.
 
To see that $\sf C_\nat \le _\sW {\sf P}_k$ for $k >2$, construct $k-1$ copies of the graph in the preceding
paragraph, using vertices $V_j = \{ u^j_i : i \in \nat \} \cup \{ v^j_i : i \in \nat \}$ for each $j\le k-1$.
For each positive $j \le k-1$, add the edge $(u_0^0 , u_0^j)$.  By an argument similar to the
preceding paragraph, any connected component of the graph can be used to locate an integer
not in the range of $p$.

If $G$ is a graph with vertex $v$, then from any decomposition of $G$ into connected components we can compute the component containing $v$.  Thus, for each $k$, ${\sf P}_k \le_\sW {\sf D}_k$.  It remains
to show that ${\sf D}_k \le_\sW \sf C_\nat$.
Let $G$ be a graph with
exactly $k$ connected components.  Let $\{s_i: i \in \nat\}$ be a bijective enumeration of the size $k$ subsets of the vertices
of~$G$.  Let $\{t_j:j\in \nat\}$ be an enumeration of all the finite paths in $G$.  We will identify pairs $(i,j)$ with their
integer codes, assuming a bijective encoding.  Define $p(i,j) = i+1$ if $t_j$ includes a path between two
vertices of $s_i$, and $p(i,j) = 0$ otherwise.  Since $G$ has exactly $k$ components, some $s_i$ has no
connected vertices, and consequently $i+1$ is not in the range of $p$.  Thus $p$ is non-surjective.
If $i+1$ is any integer not in the range of $p$, then every vertex of $G$ is either an element of $s_i$ or
connected by a path to exactly one element of $s_i$.  Using $s_i$, we can compute a decomposition of
$G$ into connected components.  Thus ${\sf D}_k \le_\sW \sf C_\nat$.
The theorem follows by transitivity of Weihrauch reducibility.
\end{proof}

\bibliographystyle{amsplain}
\bibliography{hmg}
\end{document}